\def\F{\mathcal{F}}
\def\G{\mathcal{G}}
\def\HH{\mathcal{H}}
\def\LL{\mathcal{L}}
\def\S{\mathcal{S}}
\def\P{\mathbb{P}}
\newtheorem{thm}{Theorem}[section]
\newtheorem{conj}[thm]{Conjecture}
\newtheorem{prop}[thm]{Proposition}
\newtheorem{claim}[thm]{Claim}
\newtheorem{lemma}[thm]{Lemma}
\newtheorem{cor}[thm]{Corollary}
\newtheorem{defi}[thm]{Definition}
\newtheorem{rem}[thm]{Remark}
\newcommand{\floor}[1]{\left\lfloor#1\right\rfloor}
\newcommand \track[1] {{\color{black} #1}}
\newcommand{\st}[1]{{\iffalse #1 \fi}}
\begin{document}

\title{On the number of $P$-free set systems for tree posets $P$}
\author{J{\'o}zsef Balogh,  Ramon I.  Garcia, and Michael C. Wigal\thanks{corresponding author}}
\date{Department of Mathematics\\
University of Illinois at Urbana-Champaign\\
Urbana, IL, USA\\
\small{ \texttt{\{jobal, rig2, wigal\}@illinois.edu}}\\
\bigskip
\today}

\maketitle

\begin{abstract}
    We say a finite poset $P$ is a {\it tree poset} if its Hasse diagram is a tree. Let $k$ be the length of the largest chain contained in $P$. We show that when $P$ is a fixed tree poset, the number of $P$-free set systems in $2^{[n]}$ is $2^{(1+o(1))(k-1){n \choose \lfloor n/2\rfloor}}$. The proof uses a generalization of a theorem by Boris Bukh 
     together with a variation of the multiphase graph container algorithm.\\

     \noindent \textbf{Keywords} Set System, Tree Poset, Forbidden Subposet, Supersaturation, Containers
\end{abstract}
\section{Introduction}

Given two posets $P$ and $Q$, a \textit{poset homomorphism} is a function $f:P\rightarrow Q$ such that $f(A)\leq f(B)$ whenever $A\leq B$. A poset $Q$ \textit{contains} a poset $P$ if there is an injective poset homomorphism $\pi:P\rightarrow Q$. If $Q$ does not contain $P$ we say that $Q$ is $P$-\textit{free}. All posets considered in this paper are finite.

Given a poset $P$ and two elements $x,z\in P$, we say that $x$ \textit{covers} $z$ if $x>z $ and there is no $y\in P$ with $x> y> z$.  We define the \textit{Hasse diagram} of $P$, denoted $\st{H}\track{\text{Ha}}(P)$, as a graph with vertex set $P$ drawn in the plane such that we draw an edge from $x$ to $y$ upwards only if $y$ covers $x$ (we allow edges to cross in the drawing). A poset $P$ is a \textit{tree poset} if $\st{H}\track{\text{Ha}}(P)$ is a tree. The \textit{height} of a poset is the length of the longest maximal chain of $P$.

For a fixed poset $P$ and positive integer $n$, define the size of a largest $P$-free set system in $2^{[n]}$ as $\text{La}(n,P)$. The systematic study of $\text{La}(n,P)$, began with the work of Katona and Tarj\'{a}n \cite{KatonaTarjan}. The following conjecture has been central to the study of $\text{La}(n,P)$ which has been formulated in many places with slight variations, see \cite{Bukh,GerbnerNagyPatkosVizer,GriggsLiLu}. For positive integers $n$ and $x < n$, we let $\binom{[n]}{x}$ denote the family of all subsets of $2^{[n]}$ of size $x$. 

\begin{conj}\label{conj:LA}
    Let $P$ be a poset, then 
    $$\text{La}(n,P) = e(P)\binom{n}{\lfloor n/2 \rfloor}(1 + o(1)),$$
    where $e(P)$ denotes the largest integer $\ell$ such that for all $j$ and $n$ the family $\bigcup_{i = 1}^{\ell} \binom{[n]}{i+j}$ is $P$-free. 
\end{conj}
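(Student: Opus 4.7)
The plan is to attack matching lower and upper bounds separately. For the lower bound, take $\F_0 := \bigcup_{i=1}^{e(P)} \binom{[n]}{\lfloor n/2\rfloor - \lceil e(P)/2\rceil + i}$. By the defining property of $e(P)$ this family is $P$-free, and since $e(P)$ is a fixed constant while each of the $e(P)$ relevant binomial coefficients equals $(1-o(1))\binom{n}{\lfloor n/2\rfloor}$, we obtain $|\F_0| = e(P)\binom{n}{\lfloor n/2\rfloor}(1+o(1))$, giving the lower bound in the conjecture.

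The upper bound is the substance of the conjecture. My first move is the standard reduction to the Lubell mass: set $\lambda(\F) := \sum_{F \in \F} 1/\binom{n}{|F|}$, which equals the expected number of elements of $\F$ met by a uniformly random maximal chain in $2^{[n]}$. Since every level has at most $\binom{n}{\lfloor n/2\rfloor}$ elements, it suffices to show $\lambda(\F) \le e(P)+o(1)$ for every $P$-free family $\F$. For tree posets $P$ of height $k$ one has $e(P)=k-1$, and this Lubell bound is exactly Bukh's theorem; I would invoke Bukh's argument, conditioning on a uniformly random maximal chain $\mathcal{C}$ in $2^{[n]}$ and, on the event that $\mathcal{C}$ meets $\F$ in at least $k$ distinct levels, using extra randomness together with the tree structure of $H(P)$ to embed $P$ greedily edge by edge, with the branching at each vertex absorbed by fresh random choices. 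Averaging over $\mathcal{C}$ forces $\lambda(\F) \le k-1+o(1)$ or else produces a copy of $P$, which is the desired contradiction.

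The main obstacle is the general poset case, where typically $e(P)$ is strictly smaller than the height of $P$ minus one (for example, the diamond $B_2$ has $e(P)=2$ but height $3$). The random-chain technique is fundamentally capped at the height-minus-one threshold, because it only inspects chains and is blind to the branching structure responsible for the smaller value of $e(P)$. Closing this gap would require a genuinely new input --- such as a structural theorem showing that near-extremal $P$-free families must be close, in an edit-distance sense, to unions of $e(P)$ consecutive middle levels --- and this is precisely why Conjecture \ref{conj:LA} remains open in full generality, with tree posets (the setting of the present paper) forming one of the main classes where it has been verified.
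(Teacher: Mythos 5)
You are trying to prove a statement that the paper does not prove and, in fact, explicitly records as \emph{false} in general: Conjecture~\ref{conj:LA} is an open-ended conjecture labelled as such, and the paper immediately points out that it has recently been \emph{disproven} when $P$ is the $d$-dimensional Boolean lattice for $d \ge 4$ (citing Ellis, Ivan, and Leader). So your closing claim that ``Conjecture~\ref{conj:LA} remains open in full generality'' is not accurate --- it is known to fail --- and no proof strategy can close the gap you identify, because no such proof exists. The correct posture toward this statement is the one the paper itself takes: state it as a conjecture, note that it holds for tree posets (Bukh's theorem, which is Theorem~\ref{thm:bukh} in the paper and which the paper simply cites rather than reproves), and note the known counterexamples.

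Within the portions you do sketch, two further points. The lower bound is essentially fine: taking $e(P)$ consecutive levels centered at the middle is $P$-free by the definition of $e(P)$ (applied with $j = \lfloor n/2\rfloor - \lceil e(P)/2 \rceil$), and each such level has size $(1 - o(1))\binom{n}{\lfloor n/2\rfloor}$, giving the claimed lower bound. The upper bound sketch via the Lubell mass is the right frame for tree posets and correctly defers to Bukh. However, your illustrative example is mischaracterized: for the diamond $B_2$ the height is $3$ and $e(B_2) = 2 = 3 - 1$, so $e(P)$ is \emph{not} strictly smaller than height minus one there. What makes the diamond hard is not a mismatch between $e(P)$ and the height, but that $H(B_2)$ contains a cycle, so Bukh's tree embedding argument does not apply and the chain-based Lubell bound has so far only been pushed to roughly $2.2$--$2.25$, not $2$. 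If you want an example where $e(P)$ differs from height minus one, the butterfly poset is cleaner: it has height $2$ yet $e(B) = 2$, so $e(P) > k - 1$, which is precisely why the paper handles it via embedding into the taller tree poset $X$ (Corollary~\ref{cor:main}).
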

Conjecture~\ref{conj:LA} can be viewed as a broad generalization of the classical Sperner's Theorem \cite{Sperner}. The smallest unresolved case for Conjecture~\ref{conj:LA} is the 2-dimensional Boolean lattice (i.e., the diamond poset), which has received significant attention in the literature, see \cite{AxenovichManskeMartin,GriggsLiLu,GroszMethukuTompkins,KramerMartinYoung}. 
Only very recently, Conjecture~\ref{conj:LA} has been disproven in the case when $P$ is the $d$-dimensional Boolean lattice for $d \ge 4$, see \cite{EllisIvanLeader}. For a broader overview of the history of Conjecture~\ref{conj:LA}, see the survey of Griggs and Li \cite{GriggsLi} or the textbook of Gerb\track{n}er and Patk{\'o}s \cite[Chapter 7]{GerberPatkos}. As $e(P) = k - 1$ for all tree posets $P$ of height $k$, the following theorem of Bukh proves Conjecture~\ref{conj:LA} for all tree posets.

\begin{thm}\label{thm:bukh}\cite[Theorem 1]{Bukh}
    Let $P$ be a tree poset of height $k$, then
    $$\text{La}(n,P)=(k-1)\binom{n}{\lfloor n/2\rfloor }\left(1+O(1/n)\right).$$
\end{thm}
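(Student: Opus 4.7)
\textbf{Lower bound.} Take $\F_0$ to be any $k-1$ consecutive middle layers of $2^{[n]}$, e.g.\ $\F_0 = \bigcup_{i=0}^{k-2}\binom{[n]}{\lfloor n/2\rfloor - \lfloor (k-2)/2\rfloor + i}$. Since $\F_0$ contains no chain of length $k$ and every tree poset $P$ of height $k$ contains a chain of length $k$, the family $\F_0$ is $P$-free. The standard estimate $\binom{n}{\lfloor n/2\rfloor+j} = \binom{n}{\lfloor n/2\rfloor}(1-O(j^{2}/n))$ for bounded $j$ gives $|\F_0|\ge (k-1)\binom{n}{\lfloor n/2\rfloor}(1-O(1/n))$, matching the claimed lower bound.

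\textbf{Upper bound via an LYM inequality.} Given a $P$-free family $\F\subseteq 2^{[n]}$, define the LYM functional $\sigma(\F)=\sum_{F\in\F}1/\binom{n}{|F|}$. Because $\binom{n}{|F|}\le\binom{n}{\lfloor n/2\rfloor}$ for every $F$, we have the elementary inequality $|\F|\le\binom{n}{\lfloor n/2\rfloor}\,\sigma(\F)$, so the theorem reduces to proving
$$\sigma(\F)\le (k-1)+\frac{C_P}{n},$$
with $C_P$ depending only on $P$. Interpreting $\sigma(\F)=\mathbb{E}[|C\cap\F|]$ with $C$ a uniformly random maximal chain of $2^{[n]}$, and using the tail-sum identity $\mathbb{E}[|C\cap\F|]=\sum_{i\ge 1}\Pr[|C\cap\F|\ge i]$, the first $k-1$ terms contribute at most $k-1$, so it suffices to bound the excess $\mathbb{E}[(|C\cap\F|-(k-1))^{+}]=O_P(1/n)$.

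\textbf{Chain-based tree embedding.} The heart of the argument is to exploit $P$-freeness to show that maximal chains hitting $\F$ in $\ge k$ sets are rare. Root $P$ at an endpoint of its longest chain and enumerate its vertices $v_0,v_1,\ldots,v_{|P|-1}$ in a breadth-first order so that each $v_j$ ($j\ge 1$) is adjacent in $H(P)$ to a previously-listed parent $v_{i(j)}$. Given a chain $C$ with $k$ hits $F_1\subset\cdots\subset F_k$ in $\F$, embed the spine of $P$ onto $F_1,\ldots,F_k$. For each remaining $v_j$, whose parent is embedded at some $F^{*}\in\F$, we seek $F'\in\F$ satisfying the covering relation encoded by the edge $v_{i(j)}v_j$ of $H(P)$: that is, $F'\supset F^{*}$ or $F'\subset F^{*}$ with the appropriate relative size dictated by $P$. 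Conditioned on the portion of $C$ already fixed, the remaining extension of $C$ above or below $F^{*}$ is a uniformly random maximal chain in the sub-Boolean-lattice $[F^{*},[n]]$ or $[\emptyset,F^{*}]$, and a recursive LYM estimate applied to $\F$ restricted there shows that this extension meets $\F$ in many sets in the correct direction. A union bound over the $O(|P|)$ steps then shows that if $\Pr[|C\cap\F|\ge k]$ is too large, some embedding succeeds and produces a copy of $P$ in $\F$, a contradiction.

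\textbf{Main obstacle.} The central difficulty is making the embedding argument quantitatively tight enough to obtain an $O(1/n)$ error. Each new branch of $P$ requires a \emph{fresh} coordinate of $[n]$ not used by the spine or previously-embedded branches, which costs an $O(1/n)$ fraction of the chain-completion probability; after summing over $|P|-k$ branches this already costs $O_P(1/n)$. A more serious subtlety is that Markov's inequality only yields $\Pr[|C\cap\F|\ge k]=O(1/n)$, which combined with the trivial bound $|C\cap\F|\le n+1$ gives $O(1)$, not $O(1/n)$, for the excess expectation. To close this gap one must bound the success probability of the embedding \emph{conditionally} on $|C\cap\F|$ being large, or equivalently prove a higher-moment estimate showing that chains meeting $\F$ in very many elements are doubly rare — this is where Bukh's strengthening of the LYM approach for trees does the real work.
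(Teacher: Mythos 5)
The paper itself cites this result to Bukh and does not reprove it, but the machinery developed in Section~2 (Lemmas~\ref{lem:kRankedContention}, \ref{lem:boris_decomposition}, \ref{lem:decomposition}, \ref{lem:k_marked_chains_lowerbound}, \ref{lem:AuxSupersat1}) is exactly Bukh's toolkit, so there is a clear yardstick to compare against.

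Your lower bound and your reduction of the upper bound to bounding $\sigma(\F)$ are fine. In fact your observation that $\sigma(\F)=\mathbb{E}|C\cap\F|$ and that the excess $\mathbb{E}[(|C\cap\F|-(k-1))^{+}]$ is the relevant quantity is precisely the content of Lemma~\ref{lem:k_marked_chains_lowerbound} (Bukh's Lemma~4): that excess, times $n!/k$, is a lower bound on the number of $(k,1)$-marked chains. So up to the point of ``it suffices to bound the excess'' you are on Bukh's track.

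The genuine gap is in the embedding step, and your diagnosis of the obstacle is the wrong one. You do \emph{not} need higher-moment or conditional tail estimates: the marked-chain identity already converts the excess expectation losslessly into a count of $(k,1)$-marked chains. What you are missing is the actual embedding lemma that turns ``many marked chains'' into ``a copy of $P$.'' Your sketch (embed a spine onto one chain's hits, then grow branches one vertex at a time in random extensions of the chain) would not work as stated: a single chain only gives you one maximal chain of $P$, and extending branch vertices one at a time does not automatically land you in $\F$ with the required cover relations and size gaps. Bukh's route, reproduced here, has three structural ingredients you do not have: (1) first pass to a \emph{graded} tree poset containing $P$ (Lemma~\ref{lem:kRankedContention}), so every maximal chain of $P$ has length exactly $k$ and every branch ends at a minimal or maximal element; (2) decompose the graded tree poset into a sequence of maximal chains $C_1,\dots,C_\ell$ such that each new chain differs from the union of the previous ones by a single interval ending at a min/max (Lemmas~\ref{lem:boris_decomposition} and~\ref{lem:decomposition}); (3) embed inductively chain by chain, and before each step prune the family of marked chains of the ``bad'' ones whose top-$s$ markers form a \emph{bottleneck} — a set of markers such that every marked chain extending them must pass through a small forbidden set (Lemma~\ref{lem:AuxSupersat1}). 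The $O(1/n)$ error is tracked through the pruning step: each bottleneck witness has size at most $|P|$ and each of its elements is hit by a random chain conditioned on the markers with probability $O(1/n)$, so only an $O(|P|K^{k}/n)$ fraction of marked chains are ever discarded. Without the graded chain cover and the bottleneck-pruning induction, your sketch cannot be made quantitative, and the ``fresh coordinate'' heuristic does not substitute for it.
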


Motivated by Conjecture~\ref{conj:LA}, Gerbner, Nagy, Patk{\'o}s, and Vizer \cite{GerbnerNagyPatkosVizer} conjectured the following. 

\begin{conj}\label{conj:supersat}
    For every   poset $P$ the number of $P$-free set systems in $2^{[n]}$ is
    $$2^{(1+o(1))\st{\text{La}}\track{e}(n,P)\binom{n}{\lfloor n/2\rfloor }}.$$
\end{conj}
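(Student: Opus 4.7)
The plan is to prove matching lower and upper bounds on the count. For the lower bound, fix a $P$-free family $\F^* \subseteq 2^{[n]}$ of size $\text{La}(n,P)$; each of its $2^{\text{La}(n,P)}$ subfamilies is again $P$-free, so the number of $P$-free systems is at least $2^{\text{La}(n,P)}$. Interpreting the conjectured exponent as $(1+o(1))\text{La}(n,P)$, which is the form matching this trivial lower bound and consistent with the tree-poset target pursued in this paper, the substance of the problem lies entirely in the matching upper bound.

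For the upper bound I would apply the hypergraph container method to an auxiliary hypergraph $\HH$ on vertex set $2^{[n]}$ whose hyperedges are the images of injective poset homomorphisms $P \hookrightarrow 2^{[n]}$. The $P$-free set systems of $2^{[n]}$ are precisely the independent sets of $\HH$. Given suitable codegree and supersaturation control, the container theorems of Balogh--Morris--Samotij and Saxton--Thomason deliver a family of at most $2^{o(\text{La}(n,P))}$ \emph{containers}, each of size at most $(1+o(1))\text{La}(n,P)$, whose union covers every independent set; summing $2^{|C|}$ over containers $C$ then yields the desired bound. The critical input is a balanced supersaturation statement of the form: there exists $\delta=o(1)$ so that every $\F \subseteq 2^{[n]}$ with $|\F| \ge (1+\delta)\text{La}(n,P)$ contains many copies of $P$, distributed evenly over the levels of $2^{[n]}$.

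The main obstacles are two-fold. First, Conjecture~\ref{conj:LA} itself is open for general $P$ and in fact known to fail for $d$-dimensional Boolean lattices with $d \ge 4$ by \cite{EllisIvanLeader}, so even the asymptotics of $\text{La}(n,P)$ against which the bound should be calibrated are not currently available; the conjecture therefore must be understood relative to the true value of $\text{La}(n,P)$ rather than its conjectured value. Second, even granting $\text{La}(n,P)$, producing the balanced supersaturation statement that drives the container argument seems to require poset-specific structural input: for tree posets $P$ this is furnished by strengthening Theorem~\ref{thm:bukh} and iterating via a multiphase container algorithm, where the tree structure enters the peeling step essentially. Generalizing either the extremal estimate or the container iteration beyond tree posets is not routine, and I expect these two extremal ingredients, rather than the container machinery itself, to be the real obstruction to a full proof of Conjecture~\ref{conj:supersat}. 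A reasonable intermediate target is therefore to establish the conjecture for any class of posets for which both an asymptotic formula for $\text{La}(n,P)$ and a matching balanced supersaturation statement can be proved: the container framework outlined above should then conclude the argument in a poset-agnostic way.
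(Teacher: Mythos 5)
You are right to treat this statement as what it is: an open conjecture that the paper itself does not prove in general, but only in the tree-poset case (Theorem~\ref{thm:main}), and your reading of the exponent as $(1+o(1))\text{La}(n,P)$ (the displayed extra factor $\binom{n}{\lfloor n/2\rfloor}$ being a slip, since $\text{La}(n,P)$ is already of that order) is the intended one. Your lower bound is exactly the paper's (subfamilies of an extremal $P$-free family, or of $\bigcup_{i=0}^{k-2}\binom{[n]}{\lfloor n/2\rfloor+i}$ in the tree case), and your overall ``supersaturation plus containers'' framework is in the same spirit as the paper's proof for trees. The concrete difference is in the machinery: you propose invoking the general Balogh--Morris--Samotij/Saxton--Thomason hypergraph container theorems, which would indeed require a balanced supersaturation statement with codegree control that is not available even for trees, whereas the paper avoids this entirely. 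It instead proves an unbalanced, Bukh-style supersaturation for \emph{blowups} $P(x,t)$ of a graded tree poset via $(k,a)$-marked chains (Lemma~\ref{lem:k_marked_chains_lowerbound}, Lemma~\ref{lem:AuxSupersat1}, Corollaries~\ref{cor:supersat1} and~\ref{cor:supersat2}) and then runs a bespoke deterministic container algorithm (Lemma~\ref{lem:algorithm}) in two phases: the blowup structure guarantees that each element added to the fingerprint forces the deletion of $t$ elements of the container, so the fingerprint has size $|P||\mathcal{S}|/t$ and no codegree hypotheses are needed; the first phase with $t=n$ shrinks $2^{[n]}$ to $O\bigl(\binom{n}{\lfloor n/2\rfloor}\bigr)$, and the second with $t=\log n$ brings the container size down to $(k-1+\varepsilon)\binom{n}{\lfloor n/2\rfloor}$ with only $2^{o(\binom{n}{\lfloor n/2\rfloor})}$ containers. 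So your assessment of where the difficulty lies (extremal and supersaturation input for general $P$, especially now that Conjecture~\ref{conj:LA} fails for high-dimensional Boolean lattices) is accurate, but the intermediate target you state at the end could be weakened: the paper shows one can trade ``balanced'' supersaturation for supersaturation of suitable blowups together with a tailored container algorithm, which is a lighter requirement and is exactly what the tree structure furnishes.
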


Patk{\'o}s and Treglown \cite{PatkosTreglown} proved Conjecture~\ref{conj:supersat} in the special case when $P$ is a tree poset of height at most five and radius at most 2. Our main result is proving Conjecture~\ref{conj:supersat} for all tree posets. 

\begin{thm}\label{thm:main}
Let $P$ be a tree poset of height $k$, then the number of $P$-free set systems in $2^{[n]}$ is 
$$2^{(1+o(1))(k-1)\binom{n}{\lfloor n/2\rfloor}}.$$
\end{thm}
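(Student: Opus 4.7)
My plan is to prove Theorem~\ref{thm:main} in two parts: a trivial lower bound and a container-based upper bound.

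For the lower bound, Theorem~\ref{thm:bukh} furnishes a $P$-free family $\F_0 \subseteq 2^{[n]}$ of size $(k-1)\binom{n}{\lfloor n/2\rfloor}(1-o(1))$, for instance the union of $k-1$ consecutive middle layers. Every one of its $2^{|\F_0|}$ subfamilies is $P$-free, which already matches the target lower bound.

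For the upper bound I would run the hypergraph container method on the copy-hypergraph $\HH$ on vertex set $2^{[n]}$ whose edges are the (injective) copies of $P$, so that a family is $P$-free iff it is an independent set of $\HH$. The main engine needed is a supersaturation extension of Theorem~\ref{thm:bukh}: for every $\eps > 0$, any $\F \subseteq 2^{[n]}$ with $|\F| \ge (k-1+\eps)\binom{n}{\lfloor n/2\rfloor}$ should contain at least $c(\eps,P)\binom{n}{\lfloor n/2\rfloor}^{|P|}$ copies of $P$. To prove this I would revisit Bukh's random full-chain argument and show that a uniformly random maximal chain meets $\F$ in more than $k-1$ elements with probability $\Omega(\eps)$; a careful double count of such rich chains then yields many distinct embeddings of $P$ into $\F$, exploiting the tree structure (copies of a tree poset are built up edge-by-edge along chains using Lubell-style averaging).

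Equipped with supersaturation, I would run a multiphase Kleitman--Winston style container algorithm. In each phase the algorithm locates a vertex of $\F$ with high degree in the current residual hypergraph, appends it to the fingerprint $T(\F) \subseteq \F$, and prunes out high co-degree vertices from the ambient set. Multiple phases are required because $\HH$ is drastically nonuniform — copies of $P$ concentrate around the middle layers — so after each sweep we must re-apply supersaturation to the restricted problem and continue until the container $C(T) \supseteq \F$ satisfies $|C(T)| \le (k-1+\eps)\binom{n}{\lfloor n/2\rfloor}$ with $|T(\F)| = o\bigl(\binom{n}{\lfloor n/2\rfloor}\bigr)$. Counting over containers then gives at most $2^{o(\binom{n}{\lfloor n/2\rfloor})} \cdot 2^{(k-1+\eps)\binom{n}{\lfloor n/2\rfloor}}$ $P$-free families; letting $\eps \to 0$ slowly with $n$ concludes.

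The main obstacle I anticipate is the supersaturation step: Bukh's original proof produces a single copy of $P$, whereas the container lemma requires a polynomial count together with a form of degree-regularity. Converting his chain-averaging into a robust copy count — plausibly by tracking how many random chains are ``overloaded'' with respect to $\F$ and how many distinct embeddings each overloaded chain supports — is where the bulk of the technical work will live. Designing the phase schedule so that no single phase inflates $|T(\F)|$ beyond $o(\binom{n}{\lfloor n/2\rfloor})$ is a secondary but non-trivial concern.
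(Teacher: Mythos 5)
Your lower bound argument is correct and matches the paper's. But the upper bound route you propose is genuinely different from the paper's, and the gap you flag at the end is exactly where the approach breaks down. You propose counting copies of $P$ to feed into the standard hypergraph container lemma (Balogh--Morris--Samotij / Saxton--Thomason). That lemma needs not only a supersaturation count but also control on the degree sequence $\Delta_\ell$ of the $|P|$-uniform ``copy hypergraph'' for every $\ell \in [|P|]$, and this hypergraph is extremely unbalanced: copies of $P$ through a fixed comparable pair $A \supsetneq B$ with $|A\setminus B|$ small vastly outnumber copies through a typical pair, and the degree of a vertex $A$ depends heavily on $|A|$. You gesture at this (``a form of degree-regularity'') but offer no mechanism to obtain it, and I do not see one. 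Moreover your claimed supersaturation count $c(\eps,P)\binom{n}{\lfloor n/2\rfloor}^{|P|}$ is already impossible: even in the full cube $2^{[n]}$ the number of copies of the $2$-chain is $3^n \ll \binom{n}{\lfloor n/2\rfloor}^{2}$, so the stated lower bound on the number of copies cannot hold for any $\F$.

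The paper sidesteps both problems by replacing copy-counting supersaturation with a \emph{structural} one: a graded tree poset $P$ has blow-ups $P(x,t)$ (Definition~\ref{def:Blowup}), and Corollaries~\ref{cor:supersat1} and \ref{cor:supersat2} show that a family slightly larger than $(k-1)\binom{n}{\lfloor n/2\rfloor}$ must contain a copy of $P(x,\log n)$, and a somewhat larger family must contain $P(x,n)$. Lemma~\ref{lem:algorithm} then runs a bespoke fingerprint algorithm: it locates a copy of $P(x,t)$ in the current container, greedily tries to trace out a copy of $P$ inside $\F$ along its branches, and when the $P$-freeness of $\F$ forces a failure, all $t$ parallel branches at that spot lie outside $\F$ and can be deleted from the container, at the cost of adding at most $|P|$ sets to the fingerprint. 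This gives $|\HH| \le |P||\S|/t$ with no degree or codegree hypotheses whatsoever; the two-phase schedule (Lemma~\ref{lem:container}, first $t=n$ on $\S = 2^{[n]}$, then $t=\log n$ on the resulting container) yields $|\HH| = o(\binom{n}{\lfloor n/2\rfloor})$ while Corollary~\ref{cor:supersat2} caps $|\G|$. So the missing ingredient in your plan is the blow-up poset $P(x,t)$ and the observation that one ``witnessed failure'' of embedding $P$ into a $P(x,t)$-copy removes $t$ vertices at once; without that, you would have to confront the codegree imbalance head-on.
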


Boris Bukh \cite{Bukh} observed that Theorem \ref{thm:bukh} also extends to a larger class of posets that embed into trees in an ideal way. This same observation extends to our Theorem \ref{thm:main} as well. Recall that $e(P)$ denotes the largest integer $\ell$ such that for all $j$ and $n$ the family $\bigcup_{i = 1}^{\ell} \binom{[n]}{i+j}$ is $P$-free.  

\begin{cor}\label{cor:main}
    Let $P$ be a poset and $Q$ be a tree poset of height $k$ such that $e(P) = k - 1$ and $Q$ contains $P$. Then the number of $P$-free set systems in $2^{[n]}$ is
    $$ 2^{(1 + o(1))(k-1)\binom{n}{\lfloor n/2 \rfloor}}.$$
\end{cor}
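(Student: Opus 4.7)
The corollary is essentially a sandwich argument that deduces the count for $P$ from the count for the richer tree poset $Q$, so my plan is to establish matching upper and lower bounds on the number of $P$-free set systems, each of the form $2^{(1+o(1))(k-1)\binom{n}{\lfloor n/2\rfloor}}$.

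For the upper bound, I would use containment monotonicity. If $\F \subseteq 2^{[n]}$ is $P$-free and $Q$ contains $P$ via an injective poset homomorphism $\pi \colon P \hookrightarrow Q$, then any injective poset homomorphism $Q \hookrightarrow \F$ would compose with $\pi$ to produce an injective poset homomorphism $P \hookrightarrow \F$, contradicting the $P$-freeness of $\F$. Hence every $P$-free family in $2^{[n]}$ is automatically $Q$-free, and therefore
\[
\#\{P\text{-free families in } 2^{[n]}\} \;\le\; \#\{Q\text{-free families in } 2^{[n]}\}.
\]
Since $Q$ is a tree poset of height $k$, Theorem~\ref{thm:main} applied to $Q$ bounds the right-hand side by $2^{(1+o(1))(k-1)\binom{n}{\lfloor n/2\rfloor}}$, which is exactly the claimed upper bound.

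For the lower bound, I would exhibit a single large $P$-free family and count its subfamilies. Since $e(P)=k-1$, by definition of $e(P)$ the family $\A_j := \bigcup_{i=1}^{k-1}\binom{[n]}{i+j}$ is $P$-free for every $j$. Choosing $j$ so that the $k-1$ layers $j+1,\dots,j+k-1$ are symmetric around $n/2$ (for instance $j = \lfloor n/2 \rfloor - \lceil (k-1)/2 \rceil$), the standard estimate $\binom{n}{\lfloor n/2\rfloor + c} = (1-o(1))\binom{n}{\lfloor n/2\rfloor}$ for any fixed $c$ yields
\[
|\A_j| \;=\; \sum_{i=1}^{k-1}\binom{n}{i+j} \;=\; (1-o(1))(k-1)\binom{n}{\lfloor n/2\rfloor}.
\]
Every subfamily of $\A_j$ is also $P$-free, producing at least $2^{|\A_j|} = 2^{(1-o(1))(k-1)\binom{n}{\lfloor n/2\rfloor}}$ distinct $P$-free set systems, which is the matching lower bound.

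The two bounds combined give the corollary, and no additional obstacle appears: all of the real work is buried in Theorem~\ref{thm:main}, which is the input needed for the upper bound, while the lower bound is a direct consequence of the definition of $e(P)$ and elementary binomial coefficient asymptotics. The only point requiring care is the direction of the containment monotonicity — to verify that it is indeed $P$-free families that form a subcollection of $Q$-free ones, not the reverse — which is handled by the composition of injective homomorphisms noted above.
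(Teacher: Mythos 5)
Your proposal is correct and follows essentially the same route as the paper: the lower bound comes from the $k-1$ middle layers being $P$-free by the definition of $e(P)$, and the upper bound from the observation that every $P$-free family is $Q$-free (since $Q$ contains $P$) combined with Theorem~\ref{thm:main} applied to $Q$. Your write-up merely spells out the containment-monotonicity and layer-size asymptotics in more detail than the paper does.
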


\begin{proof}
    As $e(P) = k - 1$ we have that $\bigcup_{i = 0}^{k-2} \binom{[n]}{\lfloor  n/2 \rfloor +i}$ and each of its subfamilies is $P$-free. This implies that the number of $P$-free set systems in $2^{[n]}$ is at least $2^{(1+o(1))(k-1)\binom{n}{\lfloor n/2\rfloor}}$. By Theorem \ref{thm:main}, the number of $Q$-free is at most $2^{(1+o(1))(k-1)\binom{n}{\lfloor n/2\rfloor}}$. As $Q$ contains $P$, the corollary follows.
\end{proof}

One of the smallest nontrivial posets Corollary~\ref{cor:main} applies to is the butterfly poset $B$, see Figure~\ref{fig:butterfly}. As $e(B) = 2$ and $B$ is contained in the $X$ poset, see Figure~\ref{fig:butterfly}, by Corollary \ref{cor:main}, we have that Conjecture \ref{conj:supersat} is true for the butterfly poset as well.

\begin{figure}[h]
    \centering
    \begin{tikzpicture}
        \node[shape=circle,draw=black,fill=black] (A) at (0,2) {};
        \node[shape=circle,draw=black, ,fill=black] (B) at (0,0) {};
        \node[shape=circle,draw=black, ,fill=black] (C) at (2,2) {};
        \node[shape=circle,draw=black, ,fill=black] (D) at (2,0) {};

        \foreach \from/\to in {A/B,A/D, D/C, C/B}
    \draw (\from) -- (\to);
    \end{tikzpicture}\quad\quad\quad\quad\quad
       \begin{tikzpicture}
        \node[shape=circle,draw=black,fill=black] (A) at (0,2){};
        \node[shape=circle,draw=black, ,fill=black] (B) at (0,0){};
        \node[shape=circle,draw=black, ,fill=black] (C) at (2,2){};
        \node[shape=circle,draw=black, ,fill=black] (D) at (2,0){};
        \node[shape=circle,draw=black, ,fill=black] (E) at (1,1){};

        \foreach \from/\to in {A/D, C/B}
    \draw (\from) -- (\to);
    \end{tikzpicture}
    \caption{Hasse diagrams for the butterfly poset and the $X$ poset, respectively.}\label{fig:butterfly}
\end{figure}
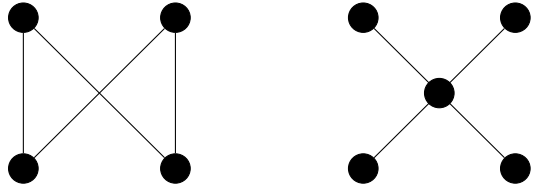

There are two natural potential extensions of our results. One is to extend to $P$-free posets, where we forbid $P$ as an induced subposet, see the relevant conjecture of Gerbner, Nagy,   Patk\'{o}s,
               and Vizer~\cite[Conjecture~9]{GerbnerNagyPatkosVizer} or the extensions of~\cite{Bukh} by Boehnlein and Jiang~\cite{TaoJiangInduced}.
The second is to investigate random variants of our results, see for example \cite{PatkosTreglown}, which would require proving stronger supersaturation conditions.

The rest of the paper is organized as follows. In Section \ref{sec:Prel}, we present some preliminary results. Then, we proceed to prove our supersaturation results Corollaries \ref{cor:supersat1} and \ref{cor:supersat2}. We then prove our main container result, Lemma \ref{lem:container}. Finally, we prove Theorem \ref{thm:main} in Section \ref{sec:Proof}.

\section{Preliminaries}\label{sec:Prel}

We use the following standard upper bound on the sum of binomial coefficients, \track{see e.g., \cite{galvin2014three}.}

\begin{prop}\label{prop:Entropy}
    For every $\alpha\in [0,1/2]$ and $n\in \mathbb{Z}^{+}$,
    \[
    \sum_{i\leq \alpha n}\binom{n}{i}\leq 2^{H(\alpha) n},
    \]
    where $H(p)=-p\log_2 p-(1-p)\log_2 (1-p)$, is the binary  entropy function. 
\end{prop}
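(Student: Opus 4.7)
The plan is the classical entropy-weighting proof: compare the partial sum to the binomial expansion of $(\alpha + (1-\alpha))^n$. First I would observe that for $\alpha \in (0, 1/2]$, the function $g(i) = \alpha^i(1-\alpha)^{n-i}$, extended to a real variable $i$, is non-increasing since $\alpha/(1-\alpha) \leq 1$. Therefore, for every integer $i$ with $i \leq \alpha n$,
\[
\alpha^i (1-\alpha)^{n-i} \;\geq\; \alpha^{\alpha n}(1-\alpha)^{(1-\alpha) n} \;=\; 2^{-H(\alpha) n},
\]
where the equality is a direct $\log_2$ computation from the definition of $H$.

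Next I would apply the binomial theorem. Since every term $\binom{n}{i}\alpha^i(1-\alpha)^{n-i}$ is non-negative,
\[
1 \;=\; (\alpha + (1-\alpha))^n \;\geq\; \sum_{i \leq \alpha n} \binom{n}{i}\alpha^i(1-\alpha)^{n-i} \;\geq\; 2^{-H(\alpha) n}\sum_{i \leq \alpha n}\binom{n}{i},
\]
and rearranging yields the desired bound. The boundary case $\alpha = 0$ is handled by the convention $0\log_2 0 = 0$, under which the left-hand side equals $\binom{n}{0} = 1 = 2^{H(0)n}$, and $\alpha = 1/2$ follows from the trivial estimate $\sum_{i \leq n/2}\binom{n}{i} \leq 2^n = 2^{H(1/2)n}$.

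There is no substantial obstacle: this is a textbook bound (a simple Chernoff-type weighting inequality), and the argument collapses to essentially one line once the monotonicity of $g$ is observed. The only minor subtlety is that $\alpha n$ need not be an integer, but this causes no issue, since the lower bound on $g(i)$ holds for every real $i \leq \alpha n$ by monotonicity of the continuous extension.
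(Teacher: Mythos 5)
The paper states Proposition~\ref{prop:Entropy} as a standard fact and does not include a proof, so there is no in-paper argument to compare against. Your proof is the classical Chernoff-type weighting argument and it is correct: the monotonicity of $\alpha^i(1-\alpha)^{n-i}$ in $i$ for $\alpha\le 1/2$, the identity $\alpha^{\alpha n}(1-\alpha)^{(1-\alpha)n}=2^{-H(\alpha)n}$, and the binomial theorem give the bound in one line, and your treatment of the boundary cases $\alpha=0$ and $\alpha=1/2$ (the latter already covered by the main argument since $\alpha/(1-\alpha)=1$ makes $g$ constant) and of non-integer $\alpha n$ is fine.
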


The easy fact that a removal of a leaf vertex of a tree results in a tree, implies the following claim.

\begin{claim}\label{claim:orderP}
For every tree poset $P$ of size $m$ and an element $x\in P$, there is a total ordering $\prec$ of the elements of $P$, which we write as $x=x_1\prec \ldots\prec x_{m}$ and such that for each $i \in [m]$, the induced subgraph $\st{H}\track{\text{Ha}}(P)[x_1,\ldots, x_{i}]$ is a tree such that $x_i$ has degree 1.
\end{claim}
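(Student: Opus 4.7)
The plan is to build the ordering in reverse: start with the whole Hasse diagram $H(P)$ and iteratively peel off a leaf distinct from $x$, until only $x$ remains. Concretely, set $T_m := H(P)$, and for $i = m, m-1, \ldots, 2$ choose a leaf $x_i$ of $T_i$ with $x_i \neq x$ and set $T_{i-1} := T_i - x_i$; finally put $x_1 := x$. The resulting sequence $x_1, x_2, \ldots, x_m$ is, by construction, a total ordering of $P$ beginning with $x$.

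The verification rests on two elementary tree facts. First, removing a leaf from a tree gives a tree (this is exactly the ``easy fact'' flagged in the statement), so each $T_i$ is a tree; consequently $H(P)[x_1, \ldots, x_i] = T_i$ is a tree for every $i \in [m]$. Second, every tree on at least two vertices has at least two leaves, which guarantees that whenever $|T_i| \geq 2$ there is a leaf of $T_i$ different from $x$, so the selection step never fails. Also by construction $x_i$ is a leaf of $T_i$, i.e.\ has degree $1$ in $H(P)[x_1, \ldots, x_i]$, for every $i \geq 2$; for $i = 1$ the claim imposes nothing since $H(P)[x_1]$ is a single vertex.

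The only point that needs care is ensuring that a leaf $\neq x$ is always available at each peeling step; this is immediate from the two-leaves-in-a-tree fact, so no genuine obstacle arises. An equivalent formulation by induction on $m$ would proceed by choosing any leaf $y \neq x$ of $H(P)$, applying the inductive hypothesis to $P \setminus \{y\}$ with the same distinguished element $x$, and appending $y$ at the end; I would likely present whichever of the two formulations reads more cleanly.
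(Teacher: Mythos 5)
Your proof is correct and matches the paper's intent: the paper gives no explicit proof, merely noting that the claim follows from the fact that deleting a leaf of a tree leaves a tree, and your leaf-peeling (or equivalently inductive) argument is exactly the natural fleshing-out of that remark. The one pedantic point — that $x_1$ has degree $0$, not $1$, in $H(P)[x_1]$ — you handle appropriately as a trivial boundary case.
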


\begin{defi}\label{def:Blowup}
For a tree poset $P$, an element $x\in P$, a total ordering of $P$ beginning at $x$ as described in Claim \ref{claim:orderP}, and a positive integer $t$, define the $t$-\textit{blowup of } $P$ \textit{centered at }$x$, denoted $P(x,t)$, to be the poset constructed as follows. Replace each $x_i\in P$ with $t^{d(x_i)}$ copies of $x_i$, where $d(x_i):=d(x,x_i)$ is the distance from $x$ to $x_i$ in the Hasse diagram $\st{H}\track{\text{Ha}}(P)$. Label the copies of $x_i$ as $x_{i,1},\ldots, x_{i,t^{d(x_i)}}$.

For each $i>1$, there is exactly one edge $x_ix_j$ in $\st{H}\track{\text{Ha}}(\st{p}\track{P})$ such that $d(x_i)=d(x_j)+1$ and $i>j$. We split the copies of $x_{i}$ into $t^{d(x_i)-1}$ sets $V_{i,k}=\{x_{i,(k-1)t+1},\ldots, x_{i, kt}\}$ for $k\in [t^{d(x_i)-1}]$. When $x_i$ covers $x_j$ in $P$, we set $x_{j,k}\track{<_{P(x,t)}}\st{>_{P(x,t)}}v$ for all $v\in V_{i,k}$. Otherwise, $x_j$ covers $x_i$ in $P$, and we set $x_{j,k}\track{>_{P(x,t)}}\st{<_{P(x,t)}} v$ for all $v\in V_{i,k}$.
\end{defi}
Notice that although there are possibly multiple choices for the order as described in Claim \ref{claim:orderP}, the poset $P(x,t)$ is unique up to isomorphism. Furthermore, the indexing of the elements of $P(x,t)$ induces a total ordering given by the lexicographic ordering.   
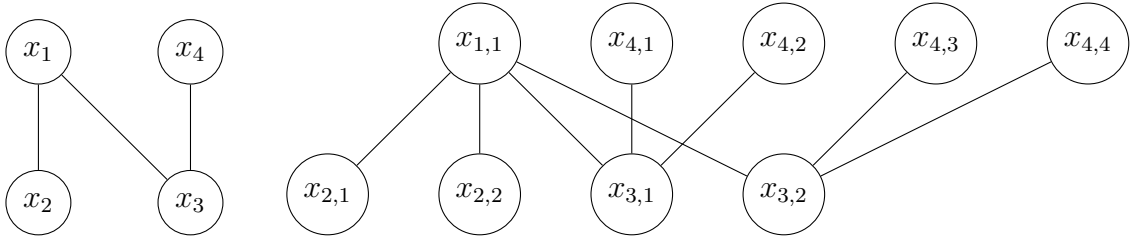
\begin{figure}[h]
    \centering
    \begin{tikzpicture}
        \node[shape=circle,draw=black] (A) at (0,2) {$x_{1}$};
        \node[shape=circle,draw=black] (B) at (0,0) {$x_{2}$};
        \node[shape=circle,draw=black] (C) at (2,2) {$x_{4}$};
        \node[shape=circle,draw=black] (D) at (2,0) {$x_{3}$};

        \foreach \from/\to in {A/B,A/D, D/C}
    \draw (\from) -- (\to);
    \end{tikzpicture}\quad \quad
       \begin{tikzpicture}
        \node[shape=circle,draw=black] (A) at (0,2) {$x_{1,1}$};
        \node[shape=circle,draw=black] (B) at (-2,0) {$x_{2,1}$};
        \node[shape=circle,draw=black] (C) at (0,0) {$x_{2,2}$};
        \node[shape=circle,draw=black] (D) at (2,2) {$x_{4,1}$};
        \node[shape=circle,draw=black] (E) at (4,2) {$x_{4,2}$};
        \node[shape=circle,draw=black] (F) at (2,0) {$x_{3,1}$};
        \node[shape=circle,draw=black] (G) at (4,0) {$x_{3,2}$};
         \node[shape=circle,draw=black] (H) at (6,2) {$x_{4,3}$};
        \node[shape=circle,draw=black] (I) at (8,2) {$x_{4,4}$};

        \foreach \from/\to in {A/B,A/C,A/F,A/G, F/D,F/E,G/H,G/I}
    \draw (\from) -- (\to);
    \end{tikzpicture}
    \caption{$2$-blowup of a path poset with four vertices.}
\end{figure}

\subsection{Supersaturation tools}

Unless otherwise stated, we follow the notation and terminology of West \cite{West} with regard to partial orders. We say a finite poset is \emph{graded} if every maximal chain has the same size. Thus our notion of graded poset of height $k$ corresponds to the notion of $k$-saturated posets of Bukh \cite{Bukh}. We have changed this terminology to avoid potential conflict with the notion of a $k$-saturated chain decomposition with respect to the classical Greene-Kleitman Theorem \cite{GreeneKleitman}. Thus in a graded tree poset, every leaf in the Hasse diagram must be a minimal or maximal element. The following lemma shows that every tree poset is contained in a graded tree poset. As observed in \cite{Bukh}, working with graded posets is simpler.

\begin{lemma}\cite[Lemma 5]{Bukh}\label{lem:kRankedContention}
   Let $P$ be a tree poset of height $k$. Then there exists a graded tree poset $\hat{P}$ of height $k$ such that $P$ is an induced subposet of $\hat{P}$ and $|\hat{P}| \le s k$, where $s$ denotes the number of maximal chains of $P$. 
\end{lemma}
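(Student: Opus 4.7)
The plan is to construct $\hat P$ by subdividing cover edges of $H(P)$ according to a suitable integer rank function on $P$. Define $R : P \to \{0, 1, \ldots, k-1\}$ by setting $R(v) = \ell(v) - 1$ when $v$ is not a maximal element of $P$, and $R(v) = k - 1$ when $v$ is a maximal element; here $\ell(v)$ denotes the length of a longest chain of $P$ ending at $v$. By construction every minimal element of $P$ has rank $0$ (since $\ell(\min) = 1$), and every maximal element has rank $k-1$ by fiat.

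The central step is to verify that $R$ is strictly increasing along cover relations of $P$. Let $p \lessdot q$ in $P$; note $p$ cannot be maximal. If $q$ is not maximal, then appending $q$ to a longest chain ending at $p$ yields $\ell(q) \ge \ell(p) + 1$, so $R(q) - R(p) = \ell(q) - \ell(p) \ge 1$. If $q$ is maximal, the same extension argument combined with $\ell(q) \le k$ forces $\ell(p) \le k - 1$, whence $R(q) - R(p) = (k-1) - (\ell(p) - 1) = k - \ell(p) \ge 1$.

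Given $R$, form $\hat P$ by replacing each cover $p \lessdot q$ with a saturated chain of length $R(q) - R(p) + 1$ through $R(q) - R(p) - 1$ freshly added elements, i.e.\ by subdividing each edge of $H(P)$ accordingly. Because subdividing the edges of a tree produces another tree, $H(\hat P)$ is a tree and $\hat P$ is a tree poset. The fresh elements generate no new comparability between two elements of $P$, so $P$ is an induced subposet of $\hat P$. Moreover, the minimal and maximal elements of $\hat P$ coincide with those of $P$, so every maximal chain of $\hat P$ runs from a $P$-minimum $m$ to a $P$-maximum $M$ via the unique monotone path in $H(\hat P)$; its length equals $R(M) - R(m) + 1 = k$. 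Therefore $\hat P$ is graded of height $k$.

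Finally, for the size bound, observe that the maximal chains of $\hat P$ are in bijection with those of $P$ (each is uniquely determined by its image after contracting subdivisions), there are $s$ of them, and each has exactly $k$ elements. Since every element of $\hat P$ lies on at least one maximal chain, the double count $|\hat P| \le \sum_C |C| = sk$ finishes the bound. I expect the main obstacle to be the verification of strict monotonicity at covers whose upper endpoint $M$ is maximal with $\ell(M) < k$; the inequality $\ell(p) + 1 \le \ell(q) \le k$ on the incident cover, forcing $\ell(p) \le k - 1$, is exactly what rescues the forced jump of $R$ to $k-1$.
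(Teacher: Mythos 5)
Your proof is correct: the rank function $R$ is well defined, the strict monotonicity on covers is verified including the delicate case of a cover whose top is maximal (where $\ell(p)+1 \le \ell(q) \le k$ forces $\ell(p) \le k-1$), subdividing preserves the tree structure and induces $P$ as an induced subposet, the interval/tree-path identification shows every maximal chain of $\hat{P}$ has exactly $k$ elements, and the double count over the $s$ maximal chains gives $|\hat{P}| \le sk$. This is in substance the same construction as Bukh's Lemma~5 (which the paper cites without reproducing): pick a strictly monotone rank sending minimal elements to $0$ and maximal elements to $k-1$, then subdivide cover edges to fill in the intermediate ranks; your explicit formula $R(v)=\ell(v)-1$ for non-maximal $v$ and $R(v)=k-1$ for maximal $v$ is simply a concrete one-shot choice of such a rank.
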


\begin{rem}
    The inequality $|\hat{P}| \le s k$ is not in the original statement of \cite[Lemma 5]{Bukh} but can be read out of its proof. 
\end{rem} 

\begin{lemma}\label{lem:blowup_graded}
    Let $P$ be a graded tree poset of height $k$, $x \in P$ and $a$ is some positive integer. Then $P(x,a)$ is a graded tree poset of height $k$.
\end{lemma}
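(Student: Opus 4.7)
The plan is to construct $P(x,a)$ iteratively along the ordering $x_1 \prec \cdots \prec x_m$ from Claim \ref{claim:orderP}, check directly that its Hasse diagram is a tree, and then use the projection $\pi(x_{i,k})=x_i$ to transfer gradedness from $P$ to $P(x,a)$.

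First I would verify by induction on $i$ that the relation graph built from $x_1,\ldots,x_i$ is a tree. Processing $x_i$ attaches, for each of the $a^{d(x_i)-1}$ existing copies $x_{j,k}$ of its tree-parent $x_j$, a fresh group $V_{i,k}$ of $a$ pendant copies of $x_i$; each individual leaf addition preserves treeness. Because paths in a tree are unique, no element can lie strictly between the endpoints of a specified cover relation (such an element would force a monotone detour around a single edge), so the prescribed relations are exactly the cover relations and $H(P(x,a))$ is a tree.

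Next I would show that $\pi$ sends chains to chains of the same length. Two distinct copies $x_{i,k}, x_{i,k'}$ are incomparable, because their unique tree-path begins with an edge from $x_{i,k}$ to a copy of the tree-parent $x_j$ and ends with an edge from some (different) copy of $x_j$ back to $x_{i,k'}$; these two endpoint edges point in opposite directions in the partial order of $P(x,a)$, so the path cannot be monotone. Consequently, if $C'$ is a chain in $P(x,a)$ then the monotone tree-path realizing its strict comparisons projects edge-by-edge to a monotone walk in $H(P)$, which is automatically a simple path in the tree $H(P)$; thus $\pi(C')$ is a chain in $P$ of the same length as $C'$.

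Finally I would argue $\pi$ preserves maximality. If $C' \subseteq P(x,a)$ is maximal but $\pi(C')$ can be extended above its top element $x_i$ by some $y$ covering $x_i$ in $P$, then $y$ is a neighbor of $x_i$ in $H(P)$; whether $y$ is the tree-parent or a tree-child of $x_i$, the group of copies of $y$ adjacent to the top copy of $C'$ in $H(P(x,a))$ lies above that top copy in the partial order, yielding an extension of $C'$ and contradicting maximality (the extension-below case is symmetric). Hence every maximal chain of $P(x,a)$ projects to a maximal chain of $P$, which has length $k$ since $P$ is graded of height $k$, and the lemma follows. The main delicacy is keeping the orientations of cover relations straight across the parent/child case analysis, but both subcases reduce to the observation that the group $V_{i,k}$ inherits from $x_i$ the same vertical relation to $x_{j,k}$ that $x_i$ bears to $x_j$ in $P$.
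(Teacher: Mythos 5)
Your proof is correct and fleshes out the same approach the paper takes, which consists of three terse assertions: that $H(P(x,a))$ is a tree, that maximal chains of $P(x,a)$ correspond to maximal chains of $P$, and that the heights agree. One minor imprecision: when arguing that two copies $x_{i,k}$ and $x_{i,k'}$ are incomparable, the two endpoint edges of the connecting tree path may in fact be incident to the \emph{same} copy of the tree-parent $x_j$ (namely when $x_{i,k}$ and $x_{i,k'}$ lie in the same group $V_{i,p}$), but your conclusion still holds since those two edges then have opposite orientations, so the path is still not monotone.
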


\begin{proof}
    Clearly, $P$ has height $k$ if and only if $P(x,a)$ has height $k$. Further, every maximal chain of $P(x,a)$ corresponds to a maximal chain of $P$ in the natural way, hence $P(x,a)$ is also graded. Furthermore, by its definition, the Hasse diagram of $P(x,a)$ is also a tree.
\end{proof}

An \emph{interval} of a poset $P$ is a set of the form $\{z \in P : x \le z \le y\}$ where $x,y \in P$. It is easy to check that for tree posets, an interval is either the empty set or a chain.

\begin{lemma}\label{lem:boris_decomposition}\cite[Lemma 6]{Bukh}
    Let $P$ be a graded tree poset of height $k$ such that $P$ is not a chain. Then there is an element $v\in P$, which is a leaf in $\st{H}\track{\text{Ha}}(P)$, and an interval $I$ of length $|I|\leq k-1$ containing $v$ such that $\st{H}\track{\text{Ha}}(P\setminus I)$ is a tree, and the poset $P\setminus I$ is a graded poset of height $k$.  
\end{lemma}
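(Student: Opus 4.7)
The plan is to find a leaf $v$ of $H(P)$ and take $I$ to be a chain prefix of the walk from $v$ into $H(P)$, i.e., the sequence $v=v_0, v_1, v_2, \ldots$ obtained by following the unique path from $v$ through vertices of degree $2$ in $H(P)$. Since $P$ is graded of height $k$, a direction change in this walk can only occur at a vertex at level $0$ or $k-1$ in $P$ (a local minimum or maximum), so starting from a leaf $v$ (which itself sits at level $0$ or $k-1$), any prefix $v_0, v_1, \ldots, v_{j}$ with $j\le k-2$ is a monotone chain of $P$ of size $j+1\le k-1$, and hence an interval of the same size in the tree poset.

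If $H(P)$ is itself a path, then $P$ is a zigzag (since $P\ne$ chain by hypothesis), and we may take any leaf $v$ with $I = \{v_0,\ldots,v_{k-2}\}$. Otherwise, I would work with the auxiliary tree $T'$ whose vertices are the branching vertices of $H(P)$ and whose edges are the maximal chain segments of $H(P)$ joining two branching vertices. Pick any leaf $w$ of $T'$; then all but one of $w$'s chain segments in $H(P)$ terminate at leaves of $H(P)$, and since $\deg_{H(P)}(w)\ge 3$, an elementary case analysis on the up/down distribution of $w$'s neighbors lets me choose a leaf $v$ of $H(P)$ in such a chain segment with the key property that $w$ has at least $2$ neighbors in $H(P)$ on the same side of the order as $v$.

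With $v$ fixed and the walk $v=v_0,v_1,\ldots,v_m=w$, I take $I = \{v_0,\ldots,v_{m-1}\}$ when the walk is monotone ($m\le k-1$), and $I = \{v_0,\ldots,v_{k-2}\}$ otherwise (the walk must then pass through a direction change at $v_{k-1}$ before hitting $w$). In both cases $|I|\le k-1$, $v\in I$, and $I$ is a pendant chain of $H(P)$ attached to $P\setminus I$ at a single vertex, so $H(P\setminus I)$ remains a tree.

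The main obstacle is verifying that $P\setminus I$ is graded of height $k$. For a maximal chain $D$ in $P\setminus I$, extend it to a maximal chain $D'$ of $P$ with $|D'|=k$; by the maximality of $D$ in $P\setminus I$, the difference $D'\setminus D$ lies inside $I$, and because $I$ is a chain and $D'$ is saturated, $D'\setminus D$ must be a contiguous subchain of $I$. Tracing the cover relations at the two ends of $I$ forces $D'\setminus D = I$, so $D = D'\setminus I$ has $v_m$ as its top element in the monotone case or equals the singleton $\{v_{k-1}\}$ in the direction-change case. But our careful choice of $v$ ensures this top element has a neighbor in $P\setminus I$ that extends $D$---either one of the $\ge 2$ remaining same-side neighbors of $w$ in the monotone case, or $v_k$ past the direction change---contradicting the maximality of $D$. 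Hence $D = D'$ and $|D|=k$, so $P\setminus I$ is graded of height $k$.
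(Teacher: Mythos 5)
The paper does not prove this lemma itself; it cites it directly from Bukh's paper~\cite[Lemma 6]{Bukh}, so there is no in-paper proof to compare against. That said, your proof is correct and follows the natural ``prune a pendant chain'' strategy one would expect.

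All the load-bearing observations check out. Because every vertex $v_1,\dots,v_{m-1}$ of the walk has Hasse-degree $2$, a direction change there means both Hasse-neighbors lie on the same side, forcing that vertex to be a minimal or maximal element of $P$, hence at rank $0$ or $k-1$ in the graded poset; so the first direction change (starting from a leaf $v_0$, which is itself at rank $0$ or $k-1$) occurs exactly at index $k-1$. Your choice of $w$ as a leaf of the auxiliary branching tree $T'$ is essential: since $\deg_{H(P)}(w)\ge 3$ and at most one incident chain segment of $w$ connects to another branching vertex, the majority side of $w$'s Hasse-neighbors always contains a segment ending at a leaf of $H(P)$, and this is exactly what you need to choose $v$ so that $w$ keeps a same-side neighbor in $P\setminus I$. (The example $\{d<a<b,\ a<c\}$ of height $3$ shows the careful choice cannot be dropped: removing the interval $\{d\}$ leaves a poset whose maximal chains have size $2$.) The interval property of $I$, the fact that $H(P\setminus I)$ is a tree (no new covers are created because $I$ is attached at a single vertex $v_{|I|-1}$, so any order-interval $(x,y)$ with $x,y\in P\setminus I$ cannot pass through $I$), and the ``extend $D$ to a maximal chain $D'$ of $P$ and argue $D'\cap I=\emptyset$ or $D'\cap I=I$'' step are all right: once $D'$ enters $I$ through the unique attaching edge it cannot leave, so it must end at $v_0$, the unique extremal element of $P$ in $I$. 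The only blemish is a harmless sign slip: in the monotone case with $v_0$ minimal, $v_m=w$ is the \emph{bottom} element of $D=D'\setminus I$, not the top; the dual holds when $v_0$ is maximal. This does not affect the argument, which is applied symmetrically.
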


The following lemma is introduced to correct a minor mistake in the induction statement of Bukh's original proof \cite[Lemma 7]{Bukh}. 

\begin{lemma}\label{lem:decomposition}
    Suppose $P$ is a graded tree poset of height $k$. Then there exists an integer $\ell \ge 1$ and maximal chains $C_1,\ldots, C_{\ell}$ of $P$ such that
    \begin{enumerate}[(i)]
        \item For all $1 \le j \le \ell$, $\bigcup_{i = 1}^j C_i$ is a graded poset of height $k$ and $\bigcup_{i = 1}^{\ell} C_i = P$.
        \item For all $1 < j \le \ell$, $I_{j} := C_{j}\setminus \left(\bigcup_{i = 1}^{j-1} C_i \right)$ is a nonempty interval containing a minimal or maximal element of $P$.
        \item For all $1 < j \le \ell$, $C_{j} \setminus I_{j} \subseteq C_i$ for some $1 \le i < j$.
    \end{enumerate}
\end{lemma}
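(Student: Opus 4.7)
The plan is to induct on $|P|$, peeling off an interval at each step via Lemma~\ref{lem:boris_decomposition}. The base case is $P$ itself a chain, in which case $\ell = 1$ and $C_1 = P$ satisfies (i) trivially while (ii) and (iii) are vacuous.

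For the inductive step I would apply Lemma~\ref{lem:boris_decomposition} to obtain a leaf $v$ of $H(P)$ and an interval $I \ni v$ with $|I| \le k-1$ such that $P' := P \setminus I$ is still a graded tree poset of height $k$. Since $I$ is an interval in a tree poset, it is a Hasse-chain $b_0 \lessdot \cdots \lessdot b_m$ with $v \in \{b_0, b_m\}$ (as $v$ is a leaf of $H(P)$). Two structural facts will drive the argument. First, because $H(P)$ is a tree and $H(P')$ is connected, there is exactly one Hasse-edge $uu'$ in $H(P)$ with $u' \in I$ and $u \in P'$. Second, a rank-count using gradedness and $|I| \le k-1$ pins down the location and direction of this edge: in the case $v = b_m$ maximal, one must have $u' = b_0$ and $u \lessdot b_0$, for otherwise $b_0$ would be a minimal leaf of $H(P)$ at rank $1$ and $I$ would be forced to be a full maximal chain of length $k$. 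The case $v = b_0$ minimal is symmetric.

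Next, I would apply the inductive hypothesis to $P'$, obtaining chains $C_1, \ldots, C_{\ell-1}$ satisfying (i)--(iii) for $P'$. Since $\bigcup_{s < \ell} C_s = P'$, pick some $C_i$ with $u \in C_i$, let $D \subseteq C_i$ be the subchain of elements of $C_i$ at or below $u$, and define $C_\ell := D \cup I$. Gradedness of $P'$ forces $|D| = k - |I|$, so $|C_\ell| = k$, making $C_\ell$ a maximal chain of $P$. Verifying the remaining conditions is then immediate: $I_\ell = C_\ell \setminus P' = I$ is a nonempty interval containing the maximum $v$, giving (ii); $C_\ell \setminus I_\ell = D \subseteq C_i$ gives (iii); and $\bigcup_{s \le \ell} C_s = P' \cup I = P$ is graded of height $k$ by assumption, giving (i) for $j = \ell$.

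The main obstacle will be the structural claim locating the attachment of $I$ to $P'$ — that there is a single Hasse-edge between them and that it sits at the non-leaf end of $I$ in the correct direction. Both parts combine the tree structure of $H(P)$ with the gradedness of $P$, $P'$ and the bound $|I| \le k-1$; once they are in hand, the construction of $C_\ell$ and the verification of (i)--(iii) reduce to bookkeeping.
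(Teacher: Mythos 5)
Your proposal is correct and follows essentially the same route as the paper: induct on $|P|$, peel off the interval $I$ given by Lemma~\ref{lem:boris_decomposition}, apply the inductive hypothesis to $P\setminus I$, and extend one of the existing chains through the unique attachment point of $I$ to form $C_\ell$. You work with the dual case ($v$ maximal rather than minimal, as the paper does via ``without loss of generality''), and you choose any $C_i$ containing $u$ whereas the paper singles out the unique $j$ with $u\in I_j$; both choices satisfy (iii). One small slip: in your justification of the structural claim you say $b_0$ ``would be a minimal leaf of $H(P)$ at rank $1$'' — it would be at rank $0$, which is what forces $|I|$ to be a full maximal chain of length $k$; the argument is otherwise sound.
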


\begin{proof}
    Let $m = |P|$. If $P$ is a chain, then the lemma holds trivially. Thus we may suppose $P$ is not a chain and $m > 2$. We proceed with induction on $m$. 

    As $P$ is not a chain and $\st{H}\track{\text{Ha}}(P)$ is a tree, by Lemma~\ref{lem:boris_decomposition}, there exists some $v \in P$, such that $v$ is a leaf in $\st{H}\track{\text{Ha}}(P)$ and there exists an interval $I$ of length $|I| \le k-1$ containing $v$ such that $\st{H}\track{\text{Ha}}(P \setminus I)$ is a tree and the poset $P \setminus I$ is a graded poset of height $k$. As we may consider the dual of $P$, without loss of generality, we may suppose $v$ is a minimal element of $P$. By induction, there exists $s \ge 1$ and chains $C_1,\ldots, C_{s}$ of $P \setminus I$ such that (i), (ii) and (iii) hold.
    
   As both $\st{H}\track{\text{Ha}}(P)$ and $\st{H}\track{\text{Ha}}(P\setminus I)$ are trees, $P$ is graded and of height $k$, and $|I| \le k - 1$, there is exactly one element of $u \in P \setminus I$,   which covers the maximum element of $I$. We let $I_{s + 1} := I$. To complete the proof, we must find a maximal chain $C_{s + 1}$ of $P$ such that $I_{s + 1} \subseteq C_{s  + 1}$ and (i), (ii), and (iii) holds for $C_1, \ldots, C_{s},C_{s+1}$.

    Letting $I_1 = C_1$, by (i) and (ii), we have
    $$ \bigcup_{i = 1}^{s} I_i = \bigcup_{i = 1}^{s} C_i = P \setminus I_{s + 1}.$$
    By (ii),  there exists a unique $j$ such that $u \in I_j$. Define
    \[ C_{s + 1} := I_{s+1} \cup \{x \ge u : x \in C_j\}.\]
    As $u \in I_j \subseteq C_j$, by induction, $C_j$ contains a maximal element of $P$. Then the set $\{x \ge u: x \in C_j\}$ is an interval between $u$ and a maximal element of $P$. Furthermore, as $I_{s + 1}$ is an interval containing a minimal element of $P$, and $u$ covers the maximum element of $I_{s + 1}$, we have that $C_{s + 1}$ is a maximal chain of $P$. As $P$ is a graded poset of height $k$ and 
    $$ \bigcup_{i = 1}^{s + 1} C_i = P,$$
    we have that (i) holds. We have that (ii) holds by construction of $I_{s + 1}$, and as $C_{s + 1} \setminus I_{s + 1} \subseteq C_j$, we have that (iii) holds as well. Thus the lemma holds by induction.
\end{proof}

Let $P$ be a graded tree poset, and $\{C_1,\ldots,C_{\ell}\}$ be a set of maximal chains satisfying conditions (i), (ii), and (iii) of Lemma~\ref{lem:decomposition}. We say $\{C_1,\ldots,C_{\ell}\}$ is a \emph{graded 
chain cover} of $P$. We note that as $P$ is graded, $|C_i| = k$ for all $i \in [\ell]$.

We define a $(k,a)$-\textit{marked chain} as an ordered pair $(M,\{F_1,\ldots,F_k\})$ such that $M$ is a maximal chain in $2^{[n]}$ and $F_i \in M$ for all $i \in [k]$ such that $F_1\supsetneq F_2\supsetneq F\st{_{2}}\track{_{3}}\supsetneq \ldots \supsetneq F_k$  and $|F_i\setminus F_{i+1}|\geq a$ for all $i\in [k-1]$. We call the $F_i$'s the \textit{markers}. The following lemma was proved in the special case of $(k,1)$-marked chains by Boris Bukh \cite[Lemma 4]{Bukh}. 

\begin{lemma}\label{lem:k_marked_chains_lowerbound}
Let $k$ and $a$ be positive integers and let $\varepsilon > 0$. If $\F\subseteq 2^{[n]}$ is of size $$|\mathcal{F}|>((k-1)a+\varepsilon)\binom{n}{\lfloor n/2\rfloor},$$ 
then there are at least $\frac{\varepsilon}{k}n!$ $(k,a)$-marked chains, whose markers are in $\F$.
\end{lemma}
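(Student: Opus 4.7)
The plan is to generalize Bukh's double-counting proof for $(k,1)$-marked chains by replacing his convexity estimate on $\binom{|M \cap \F|}{k}$ with a gap-aware analogue. Let $\mathcal{M}$ be the set of $n!$ maximal chains in $2^{[n]}$, and for $M \in \mathcal{M}$ let $f(M)$ count the marker sets $\{F_1 \supsetneq \cdots \supsetneq F_k\} \subseteq M \cap \F$ with $|F_i \setminus F_{i+1}| \ge a$ for all $i$, so that $\sum_{M} f(M)$ equals the total number of $(k,a)$-marked chains with markers in $\F$. First I would use the standard incidence count between $\F$ and $\mathcal{M}$:
\[
\sum_{M \in \mathcal{M}} |M \cap \F| \;=\; \sum_{F \in \F} |F|!\,(n-|F|)! \;\ge\; \frac{n!}{\binom{n}{\lfloor n/2 \rfloor}}\,|\F| \;>\; ((k-1)a + \varepsilon)\,n!,
\]
so the average $\bar{y} := \frac{1}{n!}\sum_{M} |M \cap \F|$ satisfies $\bar{y} > (k-1)a + \varepsilon$.

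Next I would establish the pointwise lower bound $f(M) \ge \binom{|M \cap \F| - (k-1)(a-1)}{k}$. Identifying $M$ with its sequence of sizes $0,1,\ldots,n$, the intersection $M \cap \F$ corresponds to the set $Y_M \subseteq \{0,\ldots,n\}$ of sizes of its members, and a valid marker set is a $k$-subset of $Y_M$ with consecutive elements differing by at least $a$. Writing $Y_M = \{z_1 < \cdots < z_y\}$, any index choice $1 \le i_1 < \cdots < i_k \le y$ with $i_{j+1} - i_j \ge a$ produces such a $k$-subset, because $z_{i_{j+1}} - z_{i_j} \ge i_{j+1} - i_j \ge a$ since the $z_j$'s are strictly increasing integers. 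The number of such index choices is $\binom{y - (k-1)(a-1)}{k}$ via the substitution $i_j' := i_j - (j-1)(a-1)$.

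Finally I would apply Jensen's inequality to the function $\phi(y) := \binom{y - (k-1)(a-1)}{k}$, which is convex on the non-negative integers (its second differences equal $\binom{y - 2 - (k-1)(a-1)}{k-2} \ge 0$) and extends to a convex function on $\mathbb{R}$. This yields $\sum_M f(M) \ge n!\,\phi(\bar{y})$, and combining with $\bar{y} > (k-1)a + \varepsilon$ gives
\[
\phi(\bar{y}) \;>\; \binom{(k-1) + \varepsilon}{k} \;=\; \frac{((k-1)+\varepsilon)(k-2+\varepsilon)\cdots(1+\varepsilon)\cdot \varepsilon}{k!} \;\ge\; \frac{\varepsilon}{k},
\]
where the final inequality bounds the first $k-1$ factors of the numerator below by $k-1, k-2, \ldots, 1$. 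Putting everything together gives $\sum_M f(M) > \frac{\varepsilon}{k}\,n!$, as required.

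The main obstacle is the gap-aware pointwise bound in the second step. A naive reduction that partitions $\F$ by the residue class of $|F|$ modulo $a$ and applies Bukh's $(k,1)$-lemma to the densest residue class only yields $\frac{\varepsilon}{ak}\,n!$ marked chains, losing a factor of $a$ in the conclusion. The index-shift argument above avoids this loss by exploiting directly that the sizes of members of $M \cap \F$ form a strictly increasing integer sequence, so that gaps between chosen indices automatically translate into gaps between chosen sizes.
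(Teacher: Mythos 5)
Your proof follows the same two-step skeleton as the paper's: the incidence double-count giving $\bar{y} > (k-1)a+\varepsilon$, and the pointwise lower bound $f(M)\ge\binom{|M\cap\F|-(k-1)(a-1)}{k}$ obtained by the index-shift (equivalently, the paper's binary-string count). Where you diverge is the final averaging step: the paper factors $\binom{i-c}{k}=\binom{i-c-1}{k-1}\cdot\frac{i-c}{k}$, drops the first factor to $1$ for $i\ge(k-1)a+1$, and finishes by a purely linear estimate, whereas you invoke Jensen with $\phi(y)=\binom{y-c}{k}$. Both routes work, and yours is a natural, slightly less ad hoc alternative; the paper's is more elementary and avoids any discussion of convex extensions.

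One point needs tightening in your write-up. The polynomial $\frac{(y-c)(y-c-1)\cdots(y-c-k+1)}{k!}$ is \emph{not} convex on all of $\mathbb{R}$ when $k\ge 3$ (it has inflection points below $y=c+k-1$), so "extends to a convex function on $\mathbb{R}$" is not literally true of the polynomial. What you should use is the extension $\tilde{p}(y)=0$ for $y<(k-1)a$ and $\tilde{p}(y)=p(y)$ for $y\ge(k-1)a$, which is convex (the polynomial is convex to the right of its largest root, and the derivative jumps up from $0$ to $1/k$ at the junction), agrees with the counting function $\phi$ at nonnegative integers, and hence validates Jensen. With that extension, $\tilde{p}(\bar{y})\ge \tilde{p}((k-1)a+\varepsilon)=\binom{(k-1)+\varepsilon}{k}$, and your product estimate $\binom{(k-1)+\varepsilon}{k}\ge\varepsilon/k$ is correct. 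Also, the forward second difference of $\phi$ at $y$ is $\binom{y-(k-1)(a-1)}{k-2}$, not $\binom{y-2-(k-1)(a-1)}{k-2}$; the nonnegativity conclusion you draw from it is still right.
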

\begin{proof}
    Let $D_i$ denote the number of maximal chains that contain exactly $i$ elements from $\F$. By double counting pairs $(M,F)$ such that $M$ is a maximal chain in $2^{[n]}$ and $F\in M\cap \F$, we obtain
    \begin{equation}\label{eqn:k_marked_chains_1}
           \sum_{i}iD_i=\sum_{F\in \F}\frac{n!}{\binom{n}{|F|}}\geq |\F|\frac{n!}{\binom{n}{\lfloor n/2 \rfloor}}\geq ((k-1)a+\varepsilon)n!.
    \end{equation}
    Trivially we also have, 
    \begin{equation}\label{eqn:k_marked_chains_2}
        \sum_{i}D_i=n!.
    \end{equation}
    Using \eqref{eqn:k_marked_chains_1} and \eqref{eqn:k_marked_chains_2}, we have the following inequality.
    \begin{equation}\label{eqn:k_marked_chains_3}
         \sum_i iD_i - (k-1)a\sum_{i}D_i \ge \varepsilon n!. 
    \end{equation} 
    The number of binary strings of length $i$ with $k$ ones such that each of the first $k-1$ ones is followed by at least $a-1$ zeros is $\binom{i-(k-1)(a-1)}{k}$.  It follows that the number of $(k,a)$-marked chains is at least,
    \begin{align*}
    &\sum_{i\geq (k-1)a+1}\binom{i-(k-1)(a-1)}{k}D_i = \sum_{i\geq (k-1)a+1}\binom{i-(k-1)(a-1)-1}{k-1}\frac{i-(k-1)(a-1)}{k}D_i\\
 &\geq \sum_{i\geq (k-1)a+1}\frac{i-(k-1)(a-1)}{k}D_i \geq \sum_{i} \frac{i}{k}D_i-\sum_{i}\frac{(k-1)a}{k}D_i \geq \frac{\varepsilon}{k}n!,
    \end{align*}
where the last relation follows from \eqref{eqn:k_marked_chains_3}.
    
\end{proof}

Let $P$ be a graded tree poset, let $\{C_1,\ldots, C_{\ell}\}$ be a graded chain cover of $P$, let $\F \subseteq 2^{[n]}$ be a set system, and let $a > 0$ be a positive integer. We are interested in finding an embedding $\pi$ of $P$ into $\F$. Furthermore, by Lemma~\ref{lem:k_marked_chains_lowerbound}, there exists a large family $\LL$ of $(k,a)$-marked chains whose markers belong to $\F$. For induction purposes, we also require that our embedding maps each $C_i$ to the set of markers of some $(k,a)$-marked chain belonging to $\LL$. 

The proof of Lemma~\ref{lem:AuxSupersat1} is relatively straightforward, and follows closely the proof of \cite[Lemma~7]{Bukh}. We iteratively embed subsequentially larger subposets of $P$ with respect to a graded chain cover $\{C_1,\ldots,C_{\ell}\}$ of $P$. At each stage, we clean the respective family of $(k,a)$-marked chains $\LL$ of the ``bad" chains. We have to ensure that we do not delete too many chains, as otherwise our current choice of embedding may fail to extend to all of $P$.

\begin{lemma}\label{lem:AuxSupersat1}
    Let $\alpha \in (0,1/2)$, let $a$ and $n$ be positive integers, let $c = a! \alpha^{-a}$. Let $P$ be a graded tree poset of height $k$ and let  $\{C_1,\ldots, C_{\ell}\}$ be a graded chain cover of $P$. Let $\F \subseteq 2^{[n]}$ be a set system such that all sets of $\F$ are of size between $\alpha n$ and $(1-\alpha)n$ and let $K \in [n]$ such that no maximal chain of $2^{[n]}$ contains more than $K \ge 2k$ sets from $\F$. Let $\LL$ be a family of $(k,a)$-marked chains  with markers in $\F$ such that
    \[
    |\LL| \ge \frac{c }{n^a}\binom{K}{k}^2\binom{|P|+1}{2}n!.
    \]
    Finally, suppose $|P| < K^{-k} n^{a}/c$. Then there is an embedding $\pi$ of $P$ into $\F$ such that for every $i \in [\ell]$, $\pi(C_i)$ is the set of markers of some $(k,a)$-marked chain in $\LL$.
\end{lemma}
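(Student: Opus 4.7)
My plan is induction on $j = 1, \ldots, \ell$, extending the embedding $\pi$ one chain at a time: at step $j$, pick a marked chain $\Lambda_j \in \mathcal{L}$ and set $\pi(C_j)$ to equal its marker set, mapping the top element of $C_j$ to the top marker, and so on. The base case $j=1$ is trivial: pick any $\Lambda_1 \in \mathcal{L}$, which is nonempty since $|\mathcal{L}|>0$ by hypothesis. For the inductive step $j \ge 2$, set $V_{j-1} := \pi(\bigcup_{i<j} C_i)$, so that $|V_{j-1}| \le |P|$. Property (iii) of Lemma~\ref{lem:decomposition} yields some $i^* < j$ with $C_j \setminus I_j \subseteq C_{i^*}$, and by property (ii), $I_j$ contains a minimal or maximal element of $P$. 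By passing to the dual of both $P$ and $\mathcal{F}$ if necessary, we may assume the minimal case; then $C_j \setminus I_j$ is the top $s := k - |I_j|$ portion of both $C_j$ and $C_{i^*}$, and $\pi(C_j \setminus I_j) = (B_1, \ldots, B_s)$ coincides with the top $s$ markers of $\Lambda_{i^*}$. We must find $\Lambda_j \in \mathcal{L}$ whose top $s$ markers are $(B_1, \ldots, B_s)$ and whose remaining $k - s$ markers are disjoint from $V_{j-1}$.

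The central counting contrasts the size of $\mathcal{L}^{(a)} := \{\Lambda \in \mathcal{L} : \text{top }s\text{ markers equal }(B_1, \ldots, B_s)\}$ against the ``bad'' subset $\mathcal{L}^{(\text{bad})} \subseteq \mathcal{L}^{(a)}$ consisting of those with at least one marker at position $> s$ lying in $V_{j-1}$. A union bound over $(v,p) \in V_{j-1} \times \{s+1, \ldots, k\}$, combined with the gap condition $|B_s \setminus v| \ge a$ and the size bound $|v| \ge \alpha n$, yields an upper bound of the form $|V_{j-1}| \cdot k \cdot \binom{K}{k} \cdot \frac{c}{n^a} \cdot M(B_1, \ldots, B_s)$ on $|\mathcal{L}^{(\text{bad})}|$, where $M(\ldots)$ denotes the number of maximal chains of $2^{[n]}$ through the listed sets; the $c/n^a$ factor arises from the reduction incurred when fixing an extra marker of size $\ge \alpha n$ on the chain, using that $a$ consecutive size-drops contribute a factor of at least $(\alpha n)^a/a!$ to the count of maximal chains through the extended sequence.

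The main obstacle is to secure a matching lower bound $|\mathcal{L}^{(a)}| > |\mathcal{L}^{(\text{bad})}|$, since the prefix $(B_1, \ldots, B_s)$ is inherited from the earlier choice of $\Lambda_{i^*}$ and is not under our direct control at step $j$. I plan to resolve this by strengthening the induction: at each step $i$, we pick $\Lambda_i$ from a ``popular'' prefix fiber of $\mathcal{L}$, namely one whose top-$s'$ markers appear in at least a prescribed threshold number of chains in $\mathcal{L}$, for each prefix length $s'$ that some future step $j > i$ will demand (these future lengths are determined in advance by the graded chain cover structure and by property (iii)). An averaging argument over $\mathcal{L}$ ensures such popular choices exist at each step, and the quadratic factor $\binom{|P|+1}{2}$ in the hypothesis on $|\mathcal{L}|$ is calibrated to absorb the cumulative deficit across the $\ell \le |P|$ iterations, while the extra $\binom{K}{k}$ factor accounts for the per-maximal-chain multiplicity of marked chains and the side condition $|P| < K^{-k}n^a/c$ ensures that each step's bad count genuinely stays below the popular fiber's size.
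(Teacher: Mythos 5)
Your setup (forward induction along the graded chain cover, the observation that at step $j$ the inherited prefix $(B_1,\ldots,B_s)$ equals the top $s$ markers of an earlier choice, and the union-bound estimate showing that the marked chains in the prefix fiber having a marker in $V_{j-1}$ at a position $>s$ number at most roughly $|P|\binom{K}{k}\frac{c}{n^a}M(B_1,\ldots,B_s)$) is sound and parallels the paper's counting. The gap is in how you propose to resolve the obstacle you correctly identified. Your ``popular prefix fiber'' invariant is only sketched, and as described it does not close: first, a fixed absolute threshold cannot work, since both the fiber size and the bad count scale with $M(\beta)$, which varies enormously with the prefix $\beta$, so popularity must be measured relative to $M(\beta)$; second, and more seriously, your averaging is over all of $\LL$, but from step $2$ onward the choice of $\Lambda_j$ is confined to the fiber over the inherited prefix, so popularity of the deeper prefixes demanded by future steps must be propagated \emph{inside} each fiber. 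Because property (iii) of Lemma~\ref{lem:decomposition} allows later chains to reuse elements embedded at intermediate steps (e.g.\ a zigzag-shaped tree), these demands nest to depth up to $\ell-1$, and when you unroll the resulting recursive thresholds each level picks up an extra factor on the order of $\binom{K}{k}$: the required density at the root becomes of order $|P|\frac{c}{n^a}\binom{K}{k}^{\Omega(\ell)}$, not $|P|\frac{c}{n^a}\binom{K}{k}$. The hypothesis $|\LL|\ge \frac{c}{n^a}\binom{K}{k}^2\binom{|P|+1}{2}n!$ carries only two factors of $\binom{K}{k}$ and a factor polynomial in $|P|$, so your claim that the quadratic factor ``absorbs the cumulative deficit across the $\ell$ iterations'' is unsubstantiated and appears false for this scheme: the deficits compound multiplicatively, not additively.

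The paper sidesteps exactly this by running the induction in the opposite direction and replacing fiber-size popularity with an intrinsic notion: a length-$s$ prefix is a \emph{bottleneck} if \emph{some} witness family $\S\subseteq\F$ of size at most $|P|$ blocks every extension of it in $\LL$, and a marked chain is bad if its prefix is a bottleneck. Since this quantifies over all potential blocking sets of size $\le|P|$, badness does not depend on any embedding choices, so all bad chains can be deleted from $\LL$ \emph{before} recursing on $P\setminus I$ with $\{C_1,\ldots,C_{\ell-1}\}$; the loss at each level is a single additive term of at most $\frac{c|P|}{n^a}\binom{K}{k}^2 n!$, which is precisely what the telescoping $\binom{|P|+1}{2}\to\binom{|P|}{2}$ in the hypothesis is calibrated for. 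The recursion then hands back an embedding all of whose chains are good, and non-bottleneckness applied with the witness $\S=\pi'(P\setminus I)$ (intersected with the sets lying at least $a$ below $F_s$) produces the extension embedding $I$. If you want to keep your forward organization, you would need to replace your popularity condition by a hereditary version of this bottleneck condition, at which point you are essentially reproducing the paper's argument; with the popularity condition as stated, the inductive step cannot be completed under the given lower bound on $|\LL|$.
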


\begin{proof}

The proof is by induction on $\ell$. If $\ell = 1$, then $P$ is a chain of height $k$ with $C_1 = P$. In this case, as $\LL \neq \emptyset$, we may choose an arbitrary $(k,a)$-marked chain $M \in \LL$. As the markers of $M$ belong to $\F$, there is a natural embedding of $P$ into the markers of $M$.

Suppose $\ell \ge 2$. Let $I = C_{\ell} \setminus \left( \bigcup_{i = 1}^{\ell - 1}C_i \right)$. By Lemma~\ref{lem:decomposition}, $I$ is a nonempty interval containing a minimal or maximal element of $P$ such that $C_\ell \setminus I \subseteq C_j$ for some $1 \le j < \ell$. Without loss of generality, we may assume $I$ contains a minimal element of $P$. Let $s = |C_{\ell} \setminus I\st{_{\ell}}| > 0$. 

 The chain $F_1 \supsetneq \cdots \supsetneq F_s$ is a \emph{bottleneck} (with respect to $\F$, $P$, and $\LL$) if there exists 
\[ \S \subseteq \{X: X \subseteq F_s, \text{ } X\in\F \text{ and } |X|\leq |F_s|-a\},  \]
 such that $|\S| \le |P|$, and for every $(k,a)$-marked chain of $\LL$ of the form $(M,\{F_1,\ldots,F_k\})$, such that $s < k$ and $F_i \st{\subsetneq}\track{\supsetneq} F_{i+1}$ for all $i \in [k-1]$,
 we have $\S \cap \{F_{s+1},\ldots, F_k\} \neq \emptyset$. If such an $\S$ exists, we say $\S$ is a \emph{witness} of that the chain $\{F_1,\ldots,F_s\}$ is a bottleneck. If $\{F_1,\ldots,F_s\}$ is a bottleneck, then we let $\S(\{F_1,\ldots,F_s\})$ be an arbitrary but fixed witness of $\{F_1,\ldots,F_s\}$. We say a $(k,a)$-marked chain $(M,\{F_1, \ldots, F_k\})$ is \emph{bad} if $\{F_1,\ldots,F_s\}$ is a bottleneck, otherwise it is \emph{good}.

Our goal is to find an $\LL' \subseteq \LL$ such that if we embed $P \setminus I$ with respect to $\{C_1,\ldots,C_{\ell - 1}\}$ with $(k,a)$-chains belonging to $\LL'$, then the set of markers corresponding to $C_\ell \setminus I$ is not a bottleneck. Thus it suffices to construct an $\LL' \subseteq \LL$ which contains no bad $(k,a)$-marked chains of $\LL$. 

Let $R \subseteq [K]$ such that $|R| = s = |C_{\ell} \setminus I|$. We sample a maximal chain $M$ of $2^{[n]}$ uniformly at random.  Suppose $|M \cap \F| = t > 0$, and let $F_1 \supsetneq \cdots \supsetneq F_t$ such that $F_i \in M \cap \F$ for every $i \in [t]$. Then we let $C_R(M)$ denote the function
\begin{align*}
C_R(M) = \begin{cases}
    \{ F_i : i \in R\} &\text{ if } R \subseteq [t];\\
    \emptyset &\text{ otherwise.}
    \end{cases}
\end{align*}
 That is, for $R \subseteq [t]$, the function $C_R(M)$ denotes the subchain of elements of $M \cap \F$ indexed by $R$. We say $M$ is \emph{$R$-bad}, if $C_R(M)$ is a bottleneck, and there exists a $(k,a)$-marked chain of $\LL$ whose $s$ largest markers are $C_R(M)$. Note that if $C_R(M) = \emptyset$, then $M$ is not $R$-bad, vacuously. Let $\st{B_R}\track{\text{Bad}_R}$ be the event that $M$ is $R$-bad. Our goal is to estimate the probability of $\st{B_R}\track{\text{Bad}_R}$ for each possible choice of $R$.

If $C_R(M)$ is not a bottleneck, then $\st{B_R}\track{\text{Bad}_R}$ does not happen. Thus suppose otherwise. Let $C_R(M)$ be a bottleneck with witness $\S := \S(C_R(M))$ and let $F \in \S$. As $\LL$ consists of $(k,a)$-marked chains, we can assume $|F'|\geq |F|+a$ for every $F\in \S$ and all $F' \in C_R(M)$. As  $|F| \ge \alpha n$,  it follows that, (recall that $M$ is randomly chosen, and $F$ and $\S$ are fixed),
\begin{align*}
    \P(F\in M \cap \S|\text{ }C_R(M) \text{ is a bottleneck with witness } \S)&\leq \binom{|F|+a}{|F|}^{-1} \leq \binom{\alpha n+a}{a}^{-1}\\
    &\leq \frac{a!}{(\alpha n)^{a}} = cn^{-a},
\end{align*}
as $c = a! \alpha^{-a}$.
As $|\S| \le |P|$, we have by a union bound,
\[\P(M \cap \S \neq \emptyset|\text{ }C_R(M) \text{ is a bottleneck with witness } \S) \le c|P|n^{-a}.\]

For a bottleneck $B$ of size $s$, let $E_B$ be the event that \st{$M$ is $R$-bad,} $C_R(M) = B$\st{,} and $B$ is a bottleneck with witness $\S(B)$. \track{ Recall, given $R\subseteq [K]$, $\text{Bad}_R$ denotes the event that $M$ is $R$-bad. Note that for $\text{Bad}_R$ to occur, $E_B$ has to occur for some bottleneck $B$.}

Let $\mathcal{B} \subseteq 2^{[n]}$ be the set of all bottlenecks of size $s$ such that $\P(E_B) > 0$. 
Note that if $B,B' \in \mathcal{B}$ such that $B \neq B'$, then $E_B$ and $E_{B'}$ are disjoint events. It follows, 
\begin{align*}
    \P(\st{B_R}\track{\text{Bad}_R}) &= \sum_{B \in \mathcal{B}}\P(E_B)\P(\st{B_R}\track{\text{Bad}_R}|E_B) \le \max_{B \in \mathcal{B}} \P(\st{B_R}\track{\text{Bad}_R} |E_B) \leq \max_{B \in \mathcal{B}}\P(M \cap \S(B) \neq \emptyset | E_B) \le  c |P|n^{-a}.
\end{align*}

Recall that as $|R| = s$, there are ${K \choose s}$ ways to select $R$ from $[K]$. Using $K/2 \ge k > s$\st{and $|P| < K^{-k}n^a/c$}, we have
 \[\P(M \text{ is bad})\leq \binom{K}{s}c|P|n^{-a} < \binom{K}{k}c|P|n^{-a} \le c\frac{|P|}{ k!  } K^k n^{-a} < 1\st{.}\track{,}\]
\track{where the last inequality follows from the hypothesis of Lemma \ref{lem:AuxSupersat1}, namely $|P| < K^{-k} n^{a}/c$.} Every bad chain $M$ corresponds to at most ${K \choose s} < {K \choose k}$ bad $(k,a)$-marked chains, hence we have the number of bad $(k,a)$-marked chains is at most
 \[ \frac{c |P|}{n^a}\binom{K}{k}^{2}n!.\]
Let $P' = P \setminus I$. By Lemma \ref{lem:decomposition}, $\{C_1,\ldots,C_{\ell - 1}\}$ is a graded chain cover of $P'$. Let $\LL'$ denote the set of good chains, then
\[ |\LL'| \ge |\LL| - \frac{c |P|}{n^a}{K \choose k}^2n! \ge \frac{c}{n^a}{K \choose k}^2\left( \binom{|P| + 1}{2} - |P| \right)n! = \frac{c}{n^a}{K\choose k}^2\binom{|P|}{2}n!.\]
In particular we have
\[ |\LL'| \ge \frac{c}{n^a}{K\choose k}^2\binom{|P'| + 1}{2}n!.\]

Thus by induction, there exists an embedding $\pi' : P' \to \F$ such that for every $i \in [\ell - 1]$, $\pi'(C_i)$ is the set of markers of some $(k,a)$-chain in $\LL'$. Let $F_1 \st{\subsetneq}\track{\supsetneq} \cdots \st{\subsetneq}\track{\supsetneq} F_s$ denote the markers corresponding to $C_\ell \setminus I$. As $\{F_1,\ldots,F_s\}$ is not a bottleneck and $C_\ell \setminus I \subseteq C_j \in \LL'$ for some $j \in [\ell - 1]$, there exists a $(k,a)$-marked chain $L \in \LL$ such that the first $s$ markers of $L$ correspond to $C_{\ell} \setminus I$ and its last $k - s$ markers do not intersect $\pi'(P \setminus I)$. We can thus extend $\pi'$ to an embedding $\pi : P \to \F$ such that $I$ is mapped to the last $k-s$ markers of $L$. Then $\pi(C_{\ell})$ is the set of markers of $L$, completing the proof.
\end{proof}

Let $\varepsilon > 0$, and as $\lim_{x \to \st{\infty}\track{0^{+}}}H(x) = 0$, there exists an $\st{\alpha}\track{\alpha:=\alpha(\varepsilon)} \in (0,1/2)$ such that $H(\alpha) < \epsilon$. Let $\F \subseteq 2^{[n]}$ and define
$$ \F_{\alpha} = \{F \in \F : |F| < \alpha n \text{ or } (1 - \alpha)n > |F|\}.$$
By Proposition \ref{prop:Entropy},
$$| \F_{\alpha} | \leq 2\binom{n}{\leq \alpha n}\leq 2^{H(\alpha)n+1}\leq 2^{\varepsilon n/2 + 1}\leq  \frac{\varepsilon}{2}\binom{n}{\lfloor n/2\rfloor},$$
for large enough $n$ with respect to $\varepsilon$.

\begin{cor}\label{cor:supersat1}
    Let $P$ be a graded tree poset of height $k$ and size $m$.  Suppose $x \in P$ and $\F\subseteq 2^{[n]}$ is a set system such that 
    \[
    |\mathcal{F}|>((k-1)(2k + 2m + 1)+\varepsilon)\binom{n}{\lfloor n/2\rfloor},
    \] 
    for some $\varepsilon > 0$. For $n$ sufficiently large (with respect to $m$, $k$, and $\varepsilon$), $\F$ contains a copy of $P(x,n)$.
\end{cor}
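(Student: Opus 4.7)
The strategy is to apply Lemma \ref{lem:AuxSupersat1} directly to the blown-up poset $P(x, n)$, with the integer $a$ chosen so that the hypothesis $|\F| > ((k-1)(2k+2m+1) + \varepsilon)\binom{n}{\lfloor n/2 \rfloor}$ matches the form $|\F| > ((k-1)a + \varepsilon')\binom{n}{\lfloor n/2 \rfloor}$ of Lemma \ref{lem:k_marked_chains_lowerbound}. By Lemma \ref{lem:blowup_graded}, $P(x, n)$ is a graded tree poset of height $k$, and by Lemma \ref{lem:decomposition} it admits a graded chain cover.

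First, I would prune the extremal layers of $\F$. Fix $\alpha \in (0, 1/2)$ with $H(\alpha) < \varepsilon/4$, and let $\F' := \F \setminus \F_\alpha$. By the entropy estimate recorded immediately before the corollary, $|\F_\alpha| \le (\varepsilon/2)\binom{n}{\lfloor n/2 \rfloor}$ for $n$ large, so
$$|\F'| > \bigl((k-1)(2k+2m+1) + \varepsilon/2\bigr)\binom{n}{\lfloor n/2 \rfloor}.$$
Since every set in $\F'$ has size in $[\alpha n, (1-\alpha)n]$, every maximal chain of $2^{[n]}$ contains at most $K := (1 - 2\alpha)n + 1$ sets of $\F'$, and $K \ge 2k$ for $n$ large. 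Now set $a := 2k + 2m + 1$; Lemma \ref{lem:k_marked_chains_lowerbound} applied to $\F'$ produces a collection $\LL$ of at least $\frac{\varepsilon}{2k} n!$ $(k, a)$-marked chains whose markers lie in $\F'$.

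Next I would verify the two numerical hypotheses of Lemma \ref{lem:AuxSupersat1} for $P(x, n)$, with $\F$ replaced by $\F'$ and using the $\LL$, $K$, $a$, $\alpha$ above. Since $P$ has $m$ vertices and its Hasse diagram is a tree, $d(x, x_i) \le m - 1$, and therefore $|P(x, n)| \le m \cdot n^{m-1}$. With $c = a! \alpha^{-a}$ a constant, the bound $|P(x, n)| < K^{-k} n^a / c$ reduces to an inequality of the shape $n^{k + m - 1} \ll n^a$, which holds for $n$ large since $a = 2k + 2m + 1 > k + m - 1$. Similarly, bounding $\binom{K}{k}^2 = O(n^{2k})$ and $\binom{|P(x, n)| + 1}{2} = O(n^{2m - 2})$, the required inequality
$$|\LL| \ge \frac{c}{n^a} \binom{K}{k}^2 \binom{|P(x, n)| + 1}{2} n!$$
reduces to $|\LL| \ge O(n^{2k + 2m - 2 - a}) n! = O(n^{-3}) n!$, which holds for $n$ large since $|\LL| \ge (\varepsilon/(2k)) n!$.

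Lemma \ref{lem:AuxSupersat1} therefore supplies an embedding of $P(x, n)$ into $\F' \subseteq \F$, completing the proof. The main obstacle is less any single ingenious step and more the calibration of $a$: it must be large enough that $n^a$ dominates both $|P(x, n)| \cdot K^k$ (for the size condition on $P(x,n)$) and $K^{2k} \cdot |P(x, n)|^2$ (for the lower bound on $|\LL|$), while the resulting constant $(k-1)a$ must equal the constant $(k-1)(2k + 2m + 1)$ that appears in the statement. The choice $a = 2k + 2m + 1$ is the smallest convenient value that accomplishes both.
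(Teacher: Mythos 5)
Your proposal is correct and follows essentially the same route as the paper's own proof: prune $\F$ to middle layers, apply Lemma~\ref{lem:k_marked_chains_lowerbound} with $a = 2k+2m+1$ to produce $\LL$, and verify the numerical hypotheses of Lemma~\ref{lem:AuxSupersat1} for $P(x,n)$. The only (immaterial) differences are your slightly sharper choices $K = (1-2\alpha)n+1$ instead of $K = n$ and $|P(x,n)| \le m n^{m-1}$ instead of $m n^m$.
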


\begin{proof}
\st{Let} \track{Given $\varepsilon>0$, we fix $\alpha\in (0,1/2)$ such that $H(\alpha)<\varepsilon$. Let} 
   $$\F'=\{F\in \F : \alpha n\leq |F|\le (1-\alpha)n\}.$$
    For large enough $n$ with respect to $\varepsilon$, we have
  \[
    |\mathcal{F}'| = |\F \setminus \F_{\alpha}| >((k-1)(2k + 2m + 1)+\varepsilon/2)\binom{n}{\lfloor n/2\rfloor}.
    \]

   As $P$ is a graded poset of height $k$, by Lemma~\ref{lem:blowup_graded}, $P(x,n)$ is also a graded tree poset of height $k$.  As $|P| = m$, we have $|P(x,n)| \le mn^m$. Let $a = 2k + 2m + 1$, $K = n$, and $c = a! \alpha^{-a}$. Applying Lemma~\ref{lem:k_marked_chains_lowerbound} with $\F'$ and $a$,  there exists a family $\LL$ of $(k,a)$-marked chains such that $|\LL| \ge \frac{\varepsilon}{2k}n!$. We have then
    \begin{align*}
         \frac{c}{n^{2k+2m+1}}\binom{n}{k}^2\binom{mn^m+1}{2} = o(1) < \frac{\varepsilon}{2k},
    \end{align*}
     for sufficiently large $n$. Furthermore, for sufficiently large $n$, we have $K = n > 2k$, and 
     $$|P(x,n)| \le m n^m \le n^{a - k}/c = K^{-k}n^a/c.$$
     By Lemma~\ref{lem:decomposition}, $P(x,n)$ has a graded chain cover $\{C_1,\ldots,C_{\ell}\}$ for some $\ell$. By applying Lemma~\ref{lem:AuxSupersat1} to $\alpha$, $a$, $P(x,n)$, $\F'$, $\LL$, and $\{C_1,\ldots,C_{\ell}\}$ as above, for sufficiently large $n$, we may conclude $\F$ contains a copy of $P(x,n)$. 
\end{proof}

By an argument similar to the proof of Corollary~\ref{cor:supersat1}, we have the following.

\begin{cor}\label{cor:supersat2}
    Let $P$ be a graded tree poset of height $k$ of size $m$. Suppose $x \in P$ and $\F\subseteq 2^{[n]}$ such that
    \[|\mathcal{F}|>((k-1)+\varepsilon)\binom{n}{\lfloor n/2\rfloor}\] 
    for some $\varepsilon > 0$. For $n$ sufficiently large (with respect to  $m$, $k$, and $\varepsilon$), $\F$ contains a copy of $P(x,\log(n))$.
\end{cor}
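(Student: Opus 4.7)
The plan is to mirror the proof of Corollary~\ref{cor:supersat1}, with the crucial change that we take $a=1$ in Lemma~\ref{lem:k_marked_chains_lowerbound} in place of $a=2k+2m+1$, so as to make optimal use of the weaker hypothesis $|\F|>((k-1)+\varepsilon)\binom{n}{\lfloor n/2\rfloor}$.

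First I would invoke Proposition~\ref{prop:Entropy}: choose $\alpha\in(0,1/2)$ depending on $\varepsilon$ so that $|\F_\alpha|\le(\varepsilon/2)\binom{n}{\lfloor n/2\rfloor}$ for $n$ large, and set $c=1/\alpha$. Letting $\F'=\F\setminus\F_\alpha$, we have $|\F'|>(k-1+\varepsilon/2)\binom{n}{\lfloor n/2\rfloor}$ with every $F\in\F'$ of size in $[\alpha n,(1-\alpha)n]$. Lemma~\ref{lem:k_marked_chains_lowerbound} applied to $\F'$ with $a=1$ then yields a family $\LL$ of $(k,1)$-marked chains with markers in $\F'$ and $|\LL|\ge\frac{\varepsilon}{2k}n!$. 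By Lemmas~\ref{lem:blowup_graded} and \ref{lem:decomposition}, $P(x,\log n)$ is a graded tree poset of height $k$ with $|P(x,\log n)|\le m(\log n)^m$, and it admits a graded chain cover.

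The final step is to apply Lemma~\ref{lem:AuxSupersat1} with the data $(\alpha,a=1,c,\F',\LL,P(x,\log n))$ and a carefully chosen $K$. Unlike in Corollary~\ref{cor:supersat1}, where the very large $a$ made the trivial choice $K=n$ work, here $a=1$ combined with $|P|$ only polylogarithmic forces $K$ to lie in a sublinear window. A routine calculation shows that both of the numerical conditions of Lemma~\ref{lem:AuxSupersat1}---namely $|\LL|\ge\frac{c}{n}\binom{K}{k}^2\binom{|P|+1}{2}n!$ and $|P|<K^{-k}n/c$---are satisfied for $n$ sufficiently large provided $K$ is of order $n^{1/(2k)}/\mathrm{polylog}(n)$; the polylogarithmic size of $|P(x,\log n)|$ is precisely what makes such a sublinear $K$ feasible.

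The hard part will be ensuring that the chosen sublinear $K$ is actually an upper bound on the maximum chain intersection of the set system we feed to Lemma~\ref{lem:AuxSupersat1}, since for an arbitrary $\F'$ this quantity can be as large as $(1-2\alpha)n$. I expect to resolve this by further restricting $\F'$ to a sub-family $\F''\subseteq\F'$ with chain intersection at most $K$---for instance, by restricting to a well-chosen band of at most $K$ levels or by iteratively pruning chains that are too long---while keeping $|\F''|>(k-1+\varepsilon/3)\binom{n}{\lfloor n/2\rfloor}$. Because $|P(x,\log n)|$ is only polylogarithmic, both the $|\LL|$-budget and the $|P|<K^{-k}n/c$-budget have enough slack to absorb the losses from such a reduction, but verifying it cleanly (so that all three hypotheses of Lemma~\ref{lem:AuxSupersat1} hold simultaneously) is the delicate point of the argument. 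Once the reduction is in place, Lemma~\ref{lem:AuxSupersat1} produces an embedding of $P(x,\log n)$ into $\F''\subseteq\F$, completing the proof.
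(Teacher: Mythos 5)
Your setup coincides with the paper's (pass to $\F'=\F\setminus\F_{\alpha}$, take $a=1$ and $c=1/\alpha$ in Lemma~\ref{lem:k_marked_chains_lowerbound}, then feed a graded chain cover of $P(x,\log n)$ into Lemma~\ref{lem:AuxSupersat1}), and you correctly identify the chain-intersection hypothesis on $K$ as the crux. But the fix you propose for it is a genuine gap: it is \emph{not} true in general that $\F'$ can be pruned to a subfamily $\F''$ in which no maximal chain of $2^{[n]}$ meets more than $K\approx n^{1/(2k)}$ members while keeping $|\F''|>(k-1+\varepsilon/3)\binom{n}{\lfloor n/2\rfloor}$. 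By Mirsky's theorem such an $\F''$ is a union of at most $K$ antichains, so $|\F''|\le K\cdot(\text{largest antichain of }\F')$. Now let $\F'$ be the union of about $C\binom{n}{\lfloor n/2\rfloor}/\sqrt{n}$ symmetric chains of length $\Theta(\sqrt{n})$ inside the middle band (with $C$ a suitable constant): this family has size exceeding $(k-1+\varepsilon)\binom{n}{\lfloor n/2\rfloor}$, yet any antichain meets each chain at most once, so its largest antichain has $O\bigl(\binom{n}{\lfloor n/2\rfloor}/\sqrt{n}\bigr)$ elements and every admissible $\F''$ has size $O\bigl(n^{1/(2k)-1/2}\binom{n}{\lfloor n/2\rfloor}\bigr)=o\bigl(\binom{n}{\lfloor n/2\rfloor}\bigr)$. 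Hence neither a band of $K$ levels nor any other pruning can retain the density you need, and your plan collapses exactly on such chain-heavy families.

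The missing idea, which is how the paper argues, is a dichotomy that makes the chain-intersection hypothesis harmless and lets $K$ be only polylogarithmic: either some maximal chain of $2^{[n]}$ contains at least $m(\log n)^m\ge |P(x,\log n)|$ sets of $\F'$, in which case you are already done, because containment of posets in this paper is the non-induced notion, so a chain of length at least $|Q|$ contains any poset $Q$ via a linear extension (this is precisely what disposes of the counterexample above); or else every maximal chain contains fewer than $K:=m(\log n)^m$ sets of $\F'$, and then the hypotheses of Lemma~\ref{lem:AuxSupersat1} with $a=1$, $c=1/\alpha$ and this $K$ are immediate to verify, since
\[
\frac{c}{n}\binom{m(\log n)^m}{k}^{2}\binom{m(\log n)^m+1}{2}=o(1)<\frac{\varepsilon}{2k},
\qquad
|P(x,\log n)|\le m(\log n)^m\le K^{-k}n/c ,
\]
and $K\ge 2k$ for large $n$. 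Your numerical analysis of the admissible range of $K$ is essentially right, but without this long-chain observation the argument does not go through.
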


\begin{proof}
\st{Let} \track{Given $\varepsilon>0$, we fix $\alpha\in (0,1/2)$ such that $H(\alpha)<\varepsilon$. Let} 
   $$\F'=\{F\in \F : \alpha n\leq |F|\le (1-\alpha)n\}.$$
    For large enough $n$ with respect to $\varepsilon$, we have 
  \[
    |\F'| = |\F \setminus \F_{\alpha}|>((k-1)+\varepsilon/2)\binom{n}{\lfloor n/2\rfloor}.
    \] 
    
        As $P$ is a graded poset of height $k$, by Lemma~\ref{lem:blowup_graded}, $P(x,\log n)$ is also a graded tree poset of height $k$.
     Furthermore, as $|P| = m$, we have $|P(x,\log n)| \le m(\log n)^m$. We may suppose no maximal chain of $2^{[n]}$ contains at least $m(\log n)^m$ sets from $\F'$, as otherwise, $\F'$ would contain a copy of $P(x,\log n)$. Let $a = 1$, $K =  m(\log n)^m$ and $c = a! \alpha^{-a}$. Applying Lemma~\ref{lem:k_marked_chains_lowerbound} with $\F\track{'}$ and $a$, we conclude that there exists a family $\LL$ of $(k,1)$-marked chains such that $|\LL| \ge \frac{\varepsilon}{2k} n!$. Then

    \begin{align*}
         \frac{c}{n}\binom{ m\log(n)^m}{k}^2\binom{m\log(n)^{m}+1}{2} = o(1) < \frac{\varepsilon}{2k},
    \end{align*}
    for sufficiently large $n$. By Lemma~\ref{lem:decomposition}, $P(x, \log n)$ has a graded chain cover, $\{C_1,\ldots,C_{\ell}\}$ for some $\ell$. \track{In order to apply Lemma \ref{lem:AuxSupersat1}, we need to show $K\geq 2k$ and $|P(x,\log n )| < K^{-k}n^{a}/c$.} For sufficiently large $n$, we have $K = m \log(n)^m > 2m \ge 2k$ and 
     $$|P(x,\track{\log n} \st{n})| \le m \log(n)^m \le (m \log(n)^m)^{-k} n/c = K^{-k}n^a/c.$$
    
     By applying Lemma \ref{lem:AuxSupersat1} to $\alpha$, $a$\track{, K}, $P(x,\track{\log n} \st{n})$, $\F'$, $\LL$, and $\{C_1,\ldots,C_{\ell}\}$ as above, for sufficiently large $n$, we conclude $\F$ contains a copy of $P(x,\track{\log n} \st{n})$. 
\end{proof}

\subsection{Container tools}

We will use a container algorithm based on the one used in \cite{PatkosTreglown}. We will run the algorithm in two phases, similarly to the application of the container algorithm used in  \cite{BaloghTwoPhase}. 

With $\F$ provided as an input, the goal of the algorithm is to build a unique pair $(\HH,\G)$ such that $\HH\subseteq \F\subseteq \HH\cup \G$, and $\G$ is determined by $\HH$. The key part of the proof of Theorem~\ref{thm:main} is that multiple $P$-free set systems are assigned to the same pair $(\HH,\G)$. To count all possible $P$-free set systems $\F$, first we count the number of possible pairs $(\HH,\G)$. With a pair $(\HH,\G)$ fixed, the number of subsets of $\G$ is an upper bound on the number of $P$-free set systems that are assigned to $(\HH,\G)$.

Historically, see \cite{BaloghMorrisSamotij,SaxtonThomas}, $\HH$ is called the \textit{certificate} (or \textit{fingerprint}) of $\F$ and $\HH\cup\G$ the \textit{container}. We are now ready to present our container algorithm.

\begin{lemma}\label{lem:algorithm}
    For every  tree poset $P$, an element $x\in P$, an integer $t$, and a set system $\S\subseteq 2^{[n]}$, there is a collection $\mathcal{C}$ of pairs $(\HH, \G)$ with $\HH,\G\subseteq \mathcal{S}$ such that:
    \begin{enumerate}[(i)]
        \item For each $P$-free set system $\F\subseteq \mathcal{S}$, there is a pair $(\HH, \mathcal{G})\in \mathcal{C}$ such that $\HH\subseteq \F\subseteq \HH\cup \G$.
        \item For each $(\HH,\G)\in \mathcal{C}$, $\G$ is $P(x,t)$-free and $|\HH|\leq |P||\S|/t$.
        \item For each $\HH\subseteq \mathcal{S}$ there is at most one $\G\subseteq \mathcal{S}$ such that $(\HH,\G)\in \mathcal{C}$. 
    \end{enumerate}
\end{lemma}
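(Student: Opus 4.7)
The plan is to design a deterministic container algorithm which, on input $\F \subseteq \S$, outputs a pair $(\HH, \G)$; the collection $\mathcal{C}$ is then the set of all such output pairs. Fix a linear ordering $s_1, \ldots, s_N$ of $\S$ and the ordering $x_1 \prec \cdots \prec x_m$ of $P$ rooted at $x$ from Claim~\ref{claim:orderP}. The algorithm processes $\S$ in order, maintaining growing sets $\HH_i$ and $\G_i$ (both initially empty), and at step $i$ applies a criticality test whose inputs are $(\HH_{i-1}, \G_{i-1})$ and $s_i$ but not $\F$. If the test fires and $s_i \in \F$, then $s_i$ enters $\HH$; if it fires and $s_i \notin \F$, then $s_i$ is discarded; otherwise $s_i$ enters $\G$. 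Property (i) is immediate, and property (iii) follows because the test is oblivious to $\F$ beyond $\HH$-membership: one can replay the algorithm knowing only the final $\HH$ and recover the same $\G$ step by step.

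The criticality test enforces $P(x,t)$-freeness of the final $\G$. A clean formulation is that $s_i$ is critical iff $\G_{i-1} \cup \{s_i\}$ contains a copy of $P(x,t)$ using $s_i$. Were the final $\G$ to contain a copy $\mathcal{K}$ of $P(x,t)$, then at step $i$ with $s_i$ the last-processed element of $\mathcal{K}$ we would have $\mathcal{K}\setminus\{s_i\} \subseteq \G_{i-1}$, so the test would fire at step $i$ and $s_i \notin \G$, a contradiction. To obtain the size bound $|\HH| \le |P||\S|/t$, the test is refined into $|P|$ layer-wise sub-tests indexed by the vertices of $P$ in the order of Claim~\ref{claim:orderP}, where the $j$-th sub-test verifies a $t$-fold branching at the unique edge connecting $x_j$ ($j \ge 2$) to its earlier neighbor in the induced subtree $H(P)[\{x_1, \ldots, x_j\}]$. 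Within the $j$-th sub-test, each critical event is charged to $t$ distinct witness sets playing the role of that unique neighbor, and one arranges the bookkeeping so that each set in $\S$ serves as such a witness at most once per sub-test (for instance, by marking witnesses as consumed upon use). Thus each sub-test contributes at most $|\S|/t$ critical events, and summing over the $|P|$ sub-tests yields $|\HH| \le |P||\S|/t$.

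The main obstacle is making the witness-consumption bookkeeping precise: one must simultaneously (a) verify that the layered tests collectively forbid every $P(x,t)$-copy from surviving in $\G$, (b) keep the witnesses within each sub-test disjointly charged to obtain the $|\S|/t$ count, and (c) ensure that the entire scheme is replayable from $\HH$ alone, so that property (iii) is preserved. The tree structure of $P$ is what makes this possible: in the ordering from Claim~\ref{claim:orderP}, each $x_j$ attaches to the induced subtree $H(P)[\{x_1,\ldots,x_{j-1}\}]$ by exactly one cover edge, giving the recursive structure needed for both the forbidding argument and the layered counting. Carefully sequencing the passes so that consumption in earlier layers does not interfere with the criticality test of later ones is the delicate step, but the uniqueness of the parent edge in the tree ordering makes the sequencing natural.
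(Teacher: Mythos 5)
Your set-by-set approach is genuinely different from the paper's, and the gap you flag as the ``delicate step'' is not a detail you can defer --- it is precisely the place where the argument breaks, and you have not closed it.

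With the ``clean'' criticality test ($s_i$ is critical iff $\G_{i-1}\cup\{s_i\}$ contains a copy of $P(x,t)$ using $s_i$), the final $\G$ is indeed $P(x,t)$-free, and property (iii) does replay from $\HH$. But this test can fire $\Omega(|\S|)$ times (already for $P$ a $2$-chain and $\S=2^{[n]}$: once $\G_{i-1}$ is even moderately dense, nearly every new set $s_i$ extends some chain of length two into a copy of $P(x,2)$), so you only get $|\HH|\le|\S|$, which is far weaker than $|P||\S|/t$. To recover the bound you propose to weaken the test via witness consumption, but weakening the test means some later $s_i$ that completes a surviving copy of $P(x,t)$ in $\G$ will not be flagged as critical, and then (ii) fails. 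You acknowledge the tension between (a) forbidding all copies and (b) disjointly charging witnesses, but you do not exhibit a consumption scheme that achieves both; and it is not at all clear one exists in a one-pass, element-by-element framework, because the $t$ ``witness'' candidates at a given branch are typically still sitting in $\G$ and remain available to certify many more copies.

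The paper's algorithm sidesteps this by not processing $\S$ in a single pass. It repeatedly finds the first remaining copy of $P(x,t)$ in the current $\G_i$ and greedily tries to embed $P$ (not $P(x,t)$) into $\F$ along that copy, following the ordering of Claim~\ref{claim:orderP}. If the root candidate $\pi(x_{1,1})$ is outside $\F$, it is simply deleted from $\G_i$ (one element, nothing added to $\HH$). Otherwise the embedding is extended vertex by vertex; whenever the next vertex $x_{j+1}$ cannot be embedded because all $t$ candidates $\pi(V_{j,r_k})$ lie outside $\F$, the algorithm deletes those $t$ candidates \emph{and} the already-embedded $Q_j$ from $\G_i$, while adding $Q_j$ (at most $|P|$ sets, all in $\F$) to $\HH_i$. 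Since every addition of $\le|P|$ sets to $\HH$ is accompanied by a deletion of $\ge t$ sets from $\G$, and $\G$ starts with $|\S|$ sets, the bound $|\HH|\le|P||\S|/t$ is immediate; and $P$-freeness of $\F$ guarantees every iteration eventually reaches one of these two deletion steps, so the loop terminates with $\G$ being $P(x,t)$-free. The crucial structural move you are missing is that the algorithm is allowed to delete from $\G$ a batch of $t$ sets that are all outside $\F$ --- this is sound for the container property and is exactly what buys the factor $t$ in the denominator, without any witness-consumption bookkeeping at all.
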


\begin{proof} Throughout this proof we will use three total orderings, a total ordering $\prec_1$ of the elements of $P$, i.e.,  $x=x_1\prec_1\ldots\prec_1 x_{|P|}$ given by Claim~\ref{claim:orderP}, the total ordering $\prec_2$ of the elements of $P(x,t)$ given by the lexicographic ordering as discussed in Definition~\ref{def:Blowup} and a total ordering $\prec_3$ of the copies of $P(x,t)$ in $2^{[n]}$.

We give a deterministic algorithm that takes as input a $P$-free set system $\F \subseteq \mathcal{S}$ and outputs the pair $(\HH,\G)$. 
 We initialize the variables $\HH_0=\emptyset$ and $\G_{0}=\S$. The following algorithm runs in possibly multiple iterations.
 
 We describe iteration $i \ge 0$ of the algorithm as follows. If $\G_i$ is $P(x,t)$-free, then the algorithm terminates and outputs $(\HH_i,\G_i)$. Otherwise $\G_i$ contains a copy of $P(x,t)$ and we let $\pi : P(x,t) \to \G_i$ denote a respective embedding. We further suppose $\pi(P(x,t))\subseteq \G_i$ is the first copy of $P(x,t)$ under the ordering $\prec_3$. With the given $(\HH_i,\G_i)$ and $\F$ we apply the following embedding procedure to obtain a new pair $(\HH_{i+1},\G_{i+1})$: 
\begin{itemize}
   
    \item[1.] If $\pi(x_{1,1})\notin \F$ then we set $\HH_{i+1}=\HH_i$ and $\G_{i+1}=\G_{i}\setminus \pi(x_{1,1})$ and we begin iteration $i+1$ with $(\HH_{i+1},\G_{i+1})$. 
    \item[2.] If $\pi(x_{1,1})\in \F$ then we let $Q_1 := \{\pi(x_{1,1})\}$.
    \item[3.] Let $Q_j := \{\pi (x_{1,r_1}),\ldots, \pi(x_{j,r_{j}})\}$ such that $Q_j$ is a subposet of $P$ induced by the first $j$ elements of the ordering $\prec_1$. Then $x_{j+1}$ is the first vertex of $\prec_1$ which is not embedded. By Claim~\ref{claim:orderP}, there is exactly one $x_k\in P$ such that $k \leq j$ and it is either the case that $x_{k}$ covers $x_{j+1}$ or $x_{\st{k}\track{j+1}}$ covers $x_{\st{j+1}\track{k}}$ in $P$. As $\pi(x_{k,r_k})\in Q_j$, we will try to embed one of its neighbors and thus grow $Q_j$. We break the process into two cases depending on $\pi(V_{j,r_k})\cap \F$. 
    
    \hspace{10mm}$3.1.$ If $\pi(V_{j,r_k})\cap \F\neq \emptyset$ then we denote by $x_{j+1,r_{j+1}}$ the first element under $\prec_2$ contained in $\pi(V_{j,r_k})\cap \F$. We let $Q_{j+1} := Q_j \cup \{\pi(x_{j+1,r_{j+1}})\}$ and we return to Step 3 with $Q_{j+1}$.
    
    \hspace{10mm}$3.2.$ If $\pi(V_{j,r_k})\cap \F =\emptyset$ then we let $\HH_{i + 1} := \HH_i \cup Q_j$ and $\G_{i + 1} := \G_i \setminus (\pi(V_{j,r_k})\cup Q_j)$. We begin iteration $i+1$ with $(\HH_{i+1},\G_{i+1})$.
   
\end{itemize}

Note, as $\F$ is $P$-free, Step 1 or Step 3.2 must occur once the above embedding procedure begins. Thus the embedding procedure, and consequently the algorithm, will eventually terminate for some pair $(\HH,\G)$ such that $\G$ is $P(x,t)$-free. As $\HH_0 = \emptyset$, and for all $i$, we only add elements to $\HH_i$ if they belong to $\F$, hence we have $\HH \subseteq \F$. Furthermore, as $\F \subseteq \G_0$, and for all $i$, the elements we remove from $\G_i$ are always disjoint from $\F$ or also added to $\HH_i$, we have $\HH \subseteq \F \subseteq \HH \cup \G$. Thus (i) holds.

For all $i$, if at least one element was added to $\HH_i$, i.e., if Step 3.2 occurs, then we also remove at least $t$ elements from $\G_i$. In particular, we have $|\HH| \le |P||\mathcal{S}|/t$ and (ii) holds as well.

 Let $\F$ and $\F'$ be two distinct $P$-free set systems such that the algorithm outputs $(\HH,\G)$ under input $\F$ and outputs $(\HH',\G')$ under input $\F'$. Suppose for the sake of a contradiction, that there exists $(\HH, \G),(\HH',\G')\in \mathcal{C}$ with $\HH=\HH'$ and $\G\neq \G'$. Then, there are two distinct $P$-free set systems $\F$ and $\F'$ such that each returns $(\HH,\G)$ and $(\HH',\G')$ when applying the algorithm. Since the algorithm is deterministic and $\G \neq \G'$, the algorithm at some iteration $i$, removes an element from $\G_i$ but not from $\G'_i$.
 Suppose this happens at Step 1 of the embedding procedure. Without loss of generality, suppose $\pi(x_{1,1}) \not \in \F$ and $\pi(x_{1,1}) \in \F'$. This would imply $\pi(x_{1,1}) \in \HH'$. As $\HH \subseteq \F$, we have $\HH \neq \HH'$, a contradiction. If this occurs with Step 3.2, by a similar argument, we obtain a contradiction. Thus we may conclude that (iii) holds, and consequently, the lemma holds as well.

\end{proof}

\begin{rem}\label{rem:sizeCont}
Since each choice of $\HH$ with $|\HH|\leq |P||\mathcal{S}|/t$ corresponds to at most one pair $(\HH,\G)\in\mathcal{C}$, we obtain
\[
|\mathcal{C}|\leq \binom{|\mathcal{S}|}{\leq |P||\mathcal{S}|/t}.
\]
\end{rem}

\noindent We will apply our container algorithm in two phases to obtain our main Container Lemma.
\begin{lemma}\label{lem:container} Let $P$ be a graded tree poset of height $k$. For each fixed $\varepsilon>0$, there is a collection $\mathcal{C}$ of pairs $(\HH, \G)$ with $\HH_,\G\subseteq 2^{[n]}$ and a function $\Psi$ that assigns each $P$-free set system $\F\subseteq 2^{[n]}$  a pair $\Psi(\F)=(\HH,\G)\in \mathcal{C}$ with the following properties. 
    \begin{enumerate}[(i)]
        \item $\HH\subseteq \F\subseteq \HH\cup \G$.
        \item $|\G|\leq (k-1+\varepsilon)\binom{n}{\lfloor n/2\rfloor}$.
        \item $|\mathcal{C}|=2^{O\left(\frac{\log\log n}{\log(n)}\binom{n}{\lfloor n/2 \rfloor}\right)}$.
    \end{enumerate}
\end{lemma}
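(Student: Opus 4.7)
The plan is to apply the container algorithm from Lemma~\ref{lem:algorithm} in two successive phases, as the preamble suggests, first with parameter $t_1=n$ and then with $t_2=\log n$. The two phases are calibrated to the two supersaturation corollaries: the $P(x,n)$-free output of phase~1 will be bounded in size by Corollary~\ref{cor:supersat1}, which gives only the crude constant $O_{k,|P|}(1)$ in front of $\binom{n}{\lfloor n/2\rfloor}$, and the $P(x,\log n)$-free output of phase~2 will be bounded by the sharper Corollary~\ref{cor:supersat2}, yielding the optimal leading constant $k-1+\varepsilon$ required by~(ii).

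Concretely, fix $x\in P$, set $m=|P|$, and let $\F$ be a $P$-free subfamily of $2^{[n]}$. Phase~1 applies Lemma~\ref{lem:algorithm} with $\S=2^{[n]}$ and $t=n$ to produce a pair $(\HH_1,\G_1)$ with $\HH_1\subseteq \F\subseteq \HH_1\cup\G_1$, $|\HH_1|\le m\cdot 2^n/n$, and $\G_1$ being $P(x,n)$-free. The contrapositive of Corollary~\ref{cor:supersat1} then gives $|\G_1|\le C\binom{n}{\lfloor n/2\rfloor}$ for some constant $C=C(k,m,\varepsilon)$, so the subfamily $\F\cap\G_1$ is a $P$-free system contained in the much smaller ground set $\G_1$. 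Phase~2 applies Lemma~\ref{lem:algorithm} to $\F\cap\G_1$ with $\S=\G_1$ and $t=\log n$, producing $(\HH_2,\G_2)$ with $\HH_2\subseteq \F\cap \G_1\subseteq \HH_2\cup\G_2$, $|\HH_2|\le m|\G_1|/\log n$, and $\G_2$ being $P(x,\log n)$-free. The contrapositive of Corollary~\ref{cor:supersat2} now yields $|\G_2|\le (k-1+\varepsilon)\binom{n}{\lfloor n/2\rfloor}$, which is exactly (ii). Setting $\Psi(\F):=(\HH_1\cup \HH_2,\G_2)$, condition~(i) is immediate since every element of $\F$ lies either in $\HH_1$ or in $\F\cap \G_1\subseteq \HH_2\cup\G_2$.

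For (iii), observe that the algorithm from Lemma~\ref{lem:algorithm} is deterministic, so $\G_1$ is determined by $\HH_1$ and $\G_2$ is determined by $\HH_2$ together with $\G_1$; hence the number of realized pairs in $\mathcal{C}$ is at most the number of choices for the pair $(\HH_1,\HH_2)$. By Proposition~\ref{prop:Entropy}, the number of choices for $\HH_1$ is at most $2^{H(m/n)\cdot 2^n}=2^{O((\log n/\sqrt n)\binom{n}{\lfloor n/2\rfloor})}$, while the number of choices for $\HH_2$ is at most $2^{H(m/\log n)\cdot|\G_1|}=2^{O((\log\log n/\log n)\binom{n}{\lfloor n/2\rfloor})}$. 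Since $\log n/\sqrt n=o(\log\log n/\log n)$, the product is absorbed into $2^{O((\log\log n/\log n)\binom{n}{\lfloor n/2\rfloor})}$, giving (iii). The step requiring the most care is the calibration of $t_1$ and $t_2$: a single phase with $t=\log n$ applied to $\S=2^{[n]}$ would inflate the certificate entropy by a factor of $\sqrt n$, while $t_1=n$ already shrinks $|\S|$ from $2^n$ down to $O(\binom{n}{\lfloor n/2\rfloor})$ before the logarithmic phase takes over, and this is precisely what balances the two contributions to $|\mathcal{C}|$.
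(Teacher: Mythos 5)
Your proposal is correct and follows essentially the same route as the paper: a first container phase with $t=n$ on $2^{[n]}$ whose output is controlled by Corollary~\ref{cor:supersat1}, a second phase with $t=\log n$ on $\G_1$ controlled by Corollary~\ref{cor:supersat2}, with $\Psi(\F)=(\HH_1\cup\HH_2,\G_2)$ and the container count bounded via the entropy bound on the certificates $(\HH_1,\HH_2)$, using determinism (Lemma~\ref{lem:algorithm}(iii)) so that $\G_1$ and $\G_2$ are recoverable. The size calibration you note, $\frac{\log n}{\sqrt n}\binom{n}{\lfloor n/2\rfloor}$ versus $\frac{\log\log n}{\log n}\binom{n}{\lfloor n/2\rfloor}$, matches the paper's computation.
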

\begin{proof}
We apply Lemma~\ref{lem:algorithm} with $P$, an arbitrary $x \in P$, $\mathcal{S}=2^{[n]}$ and $t=n$ to obtain a family of containers $\mathcal{C}_1$. From Remark~\ref{rem:sizeCont}, we get 
\begin{equation}\label{eq:FirstCont}
|\mathcal{C}_1|\leq \binom{2^{n}}{\leq|P|2^{n}/n}\leq 2^{H(|P|/n)2^{n}}= 2^{O\left(\log(n)2^{n}/n \right)},
\end{equation}
where the last inequality is obtained using Proposition~\ref{prop:Entropy}.

For each $(\HH_1,\G_1)\in \mathcal{C}_1$, we apply again Lemma~\ref{lem:algorithm}, this time with $P$, $x \in P$, $\mathcal{S}=\G_1$ and $t=\log(n)$, to obtain the family $\mathcal{C}_2(\G_1)$.  Finally we set 
$$\mathcal{C}:=\{(\HH,\G): \HH=\HH_1\cup\HH_2 \text{ and }\G=\G_2\text{ for some } (\HH_1,\G_1)\in\mathcal{C}_1, (\HH_2,\G_{2})\in\mathcal{C}_2(\G_1)\}.$$

Since for each $(\HH_1,\G_1)\in \mathcal{C}_1$ we have $\G_1$ is $P(x,n)$-free, using Corollary~\ref{cor:supersat1} we have that $|\G_1|\leq ((k-1)(2k + 2m + 1)+\varepsilon)\binom{n}{\lfloor n/2 \rfloor}$. Using Remark~\ref{rem:sizeCont}, we obtain 
\begin{equation}\label{eq:SecondCont}
|\mathcal{C}_2(\G_1)|\leq \binom{|\G_1|}{\leq |P||\G_1|/\log(n)}=2^{O\left(\frac{\log\log n}{\log(n)}\binom{n}{\lfloor n/2 \rfloor}\right)}.
\end{equation}
Thus, using a union bound for the size of $\mathcal{C}$ together with inequalities \eqref{eq:FirstCont} and \eqref{eq:SecondCont}, we have,
$$|\mathcal{C}|=|\cup_{(\HH_1,\G_1)\in \mathcal{C}_1}\mathcal{C}_2(\G_1)|\leq |\mathcal{C}_1|2^{O\left(\frac{\log\log n}{\log(n)}\binom{n}{\lfloor n/2 \rfloor}\right)}=2^{O\left(\frac{\log(n)2^{n}}{n} +\frac{\log\log n}{\log(n)}\binom{n}{\lfloor n/2 \rfloor}\right)}=2^{O\left(\frac{\log\log n}{\log(n)}\binom{n}{\lfloor n/2 \rfloor}\right)}.$$

For each $P$-free set system $\F\subseteq 2^{[n]}$ there is a pair $(\HH_1, \G_1)\in \mathcal{C}_1$ such that $\HH_{1}\subseteq \F\subseteq\HH_1\cup \G_1$. We have that $\F\cap \G_1$ is a $P$-free subfamily of $\G_1$ and thus, there is a pair $(\HH_2,\G_2)\in \mathcal{C}_{2}(\G_1)$ such that $\HH_2\subseteq \F\cap \G_1\subseteq \HH_2\cup\G_2$. Setting $\HH=\HH_1\cup\HH_2$ and $\G=\G_2$ we obtain the desired pair $(\HH,\G)$ with
\[
\HH=\HH_1\cup\HH_2\subseteq \F\subseteq \HH_1\cup\HH_2\cup\G_2=\HH\cup\G.
\]  
Since $\G_2$ is $P(x,\log(n))$-free, by Corollary~\ref{cor:supersat2} we have $|\G|=|\G_2|\leq (k-1+\varepsilon)\binom{n}{\lfloor n/2 \rfloor}$. 
\end{proof}

\section{Proof of Theorem~\ref{thm:main}}\label{sec:Proof}

Each subfamily of the $P$-free set system $\bigcup_{i = 0}^{k-2} \binom{[n]}{\floor{n/2} + i}$ is also $P$-free, this implies that the number of $P$-free set systems in $2^{[n]}$ is at least $2^{(1+o(1))(k-1)\binom{n}{\lfloor n/2\rfloor }}$. It only remains to show that the number of $P$-free set systems in $2^{[n]}$ is at most $ 2^{(1+o(1))(k-1)\binom{n}{\lfloor n/2\rfloor }}$. By Lemma \ref{lem:kRankedContention}, as $P$ is an induced subposet of a graded tree poset of height $k$, without loss of generality, we may assume $P$ is graded.

For fixed $\varepsilon>0$ we will show that, for large enough $n$, the number of $P$-free set systems is less than $2^{(k-1+\varepsilon)\binom{n}{\lfloor n/2 \rfloor}}$. We apply Lemma~\ref{lem:container}, with $\varepsilon/2$ as $\varepsilon$ and obtain our family of containers $\mathcal{C}$. For each $P$-free set system we obtain a pair $\Psi(\F)=(\HH,\G)\in \mathcal{C}$ such that $\HH\subseteq \F\subseteq \HH\cup\G$ and $|\G|\leq (k-1+\varepsilon/2)\binom{n}{\lfloor n/2\rfloor}$. Since $\HH\subseteq \F\subseteq \HH\cup \G$, the number of $P$-free set systems such that $\Psi(\F)=(\HH,\G)$ is at most the number of subsets of $\G$. Thus, the number of $P$-free set systems $\F\subseteq 2^{[n]}$ is at most 

\[
\sum_{(\HH,\G)\in \mathcal{C}}2^{|\G|}\leq |\mathcal{C}|2^{(k-1+\varepsilon/2)\binom{n}{\lfloor n/2 \rfloor}}\leq 2^{\left(k-1+\varepsilon/2+O\left(\frac{\log\log n}{\log n}\right)\right)\binom{n}{\lfloor n/2 \rfloor}}<2^{(k-1+\varepsilon)\binom{n}{\lfloor n/2\rfloor}},
\]
for large enough $n$, as desired.\\

\noindent \textbf{Acknowledgements} \track{The authors would like to  thank the anonymous referees for their careful reading and detailed suggestions.} \st{We}\track{The authors would also like to} thank Douglas West for the insightful conversation regarding terminology. \\

\noindent \textbf{Funding} J{\'o}zsef Balogh was supported in part by NSF grants DMS-1764123 and RTG DMS-1937241, FRG DMS-2152488, and the Arnold O. Beckman Research Award (UIUC Campus Research Board RB 24012). Michael Wigal was supported in part by NSF  RTG DMS-1937241.\\

\noindent \textbf{Code/Data Availability} Not applicable.

\section{Declarations}

\noindent \textbf{Conflict of Interest/Competing Interests} None.

\bibliographystyle{abbrv}
\bibliography{references} 

\end{document}